 \def\BIBand{and}%
\begin{document}

\ARTICLEAUTHORS{%
\AUTHOR{Xin Wang}
\AFF{Department of Industrial Engineering, Tsinghua University, Beijing 100084, China,\\
Logistics and Transportation Division, Tsinghua University Shenzhen International Graduate School, Shenzhen 518055, China,
\EMAIL{wangxin16@mails.tsinghua.edu.cn}  \URL{}}
\AUTHOR{Ruiwei Jiang}
\AFF{Department of Industrial and Operations Engineering, University of Michigan, Ann Arbor, Michigan, 48103, US \EMAIL{ruiwei@umich.edu} \URL{}}
\AUTHOR{Mingyao Qi*}
\AFF{Logistics and Transportation Division, Tsinghua University Shenzhen International Graduate School, Shenzhen 518055, China, \EMAIL{qimy@sz.tsinghua.edu.cn} \URL{}}
 } 

\ABSTRACT{%
Thanks to their fast delivery, reduced traffic restrictions, and low manpower need, drones have been increasingly deployed to deliver time-critical materials, such as medication, blood, and exam kits, in emergency situations. This paper considers a facility location model of using drones as mobile servers in emergency delivery. The model jointly optimizes the location of facilities, the capacity of drones deployed at opened facilities, and the allocation of demands, with an objective of equitable response times among all demand sites. To this end, we employ queues to model the system congestion of drone requests and consider three queuing disciplines: non-priority, static priority, and dynamic priority. For each discipline, we approximate the model as a mixed-integer second-order conic program (MISOCP), which can readily be solved in commercial solvers. We conduct extensive computational experiments to demonstrate the effectiveness and accuracy of our approach. Additionally, we compare the system performance under the three queuing disciplines and various problem parameters, from which we produce operational recommendations to decision makers in emergency delivery.
}%


\KEYWORDS{
Drone delivery; Facility location-allocation; Congestion;  Priority queue; Mixed-integer second-order cone programming.}

\maketitle

\section{Introduction}
Natural and anthropogenic disasters such as typhoons,  earthquakes, terrorist attacks, and epidemic outbreaks make severe impacts on the human kind. For example, various natural disasters have caused 449k deaths and a trillion USD economic losses between 2010 and 2019~\citep{owidnaturaldisasters}. Additionally, as of Oct. 2021, an estimated 5 million people have died from COVID-19~\citep{ourworld}. Disasters trigger high demands for emergency delivery service such as medication, blood, and exam kits.

Of all characteristics in such service, response time is arguably the most significant in deciding the survival of those affected. Unfortunately, as infrastructure damages typically arrive with the disaster, traditional ground transportation often result in delay of response, especially in remote areas. In contrast, drones, or unmanned aerial vehicles, have the advantages of fast delivery, reduced traffic restrictions, and low manpower need.
For example, it has been demonstrated that drones are 8 to 12 times faster than traditional methods in delivering testing sample tubes in Germany~\citep{qs}. In view of these advantages, many agencies have deployed drones to deliver small relief items (see, e.g.,~\cite{zipline1} and~\cite{covid19}).

A key research task for deploying drones in emergency delivery is to optimally locate the facilities, where drones load items, and distribute drones among these facilities. To this end, several families of location models have been studied in the existing literature (see, e.g.,~\cite{59.3}). In particular, covering models provide adequate coverage to all demand sites with a minimum number of facilities (see, e.g., \citet{40daskin1983maximum}, \citet{85.1marianov2002location}), median models satisfy the demand with a minimum facility and traveling cost (see, e.g., \citet{80.3berman1985optimal},  \citet{98}), and center models seek to improve the worst-case quality of service among all demand sites. Although the first two models have been widely applied, in this paper, we consider a center model in order to produce \emph{equitable} response times in emergency delivery. Here, the response time for a demand request refers to the duration from when the demand arises to when it is fulfilled. As a result, the response time consists of not only the travel time of the drone but also the (random) waiting time, which occurs if there is no available drones when the demand arises due to a congestion of drone requests. We refer to~\citet{book5.1} for existing studies on facility location with congestion, among which queuing is a natural and preferable modeling choice for the congestion at each opened facility. Unfortunately, it also yields a challenging facility location formulation. As a result, most existing studies adopt heuristic solution methods. In this paper, we derive an approximate solution method based on mixed-integer second-order conic programs (MISOCPs), which can readily be solved in commercial solvers.

In emergency situations, demand sites may not be equally important. For example, the epicenter of an earthquake usually suffers the largest impacts and would benefit the most from faster delivery of relief supplies. This motivates us to incorporate the following two priority disciplines into the queuing model.
\begin{itemize}
\item \emph{Static priority} divides the set of demand sites into different priority levels and always fulfills the demand from a higher level before doing so to the lower ones~\citep{111silva2008locating,111.3jayaswal2017facility}.
\item \emph{Dynamic priority} updates the priority of each demand site based on its inherent priority level and the waiting time it has experienced. That is, longer waiting time  promotes the priority of a demand site \citep{jackson1960some, jackson1961queues}. To our best knowledge, the dynamic priority discipline has received little attention in the facility location literature.  
\end{itemize}
As a benchmark, we also consider a queuing model with no priority levels (i.e., all demand sites have the same priority).
The main contributions of this paper are as follows: 
\begin{enumerate}
\item  We adopt three disciplines: non-priority, static priority, and dynamic priority to consider different emergency degrees and importance of heterogeneous demands  in our facility location problem with congestion and mobile servers. To our best knowledge the dynamic priority is explored for the first time.
\item We explicitly consider the system congestion via a queuing model in a facility location problem, with an objective of equitable response times among all demand sites. We employ a generally distributed service time for drones as mobile servers, that depends on the demand assignment and  facility capacity decisions.

\item We reformulate the proposed mixed-integer nonlinear programs into  MISOCPs, which can be solved directly by the existing  solver like Gurobi. Computational experiments are conducted for sensitivity analysis and comparison of the system performance under different priority disciplines by a hybrid approach of optimization and simulation. 
\end{enumerate}

The remainder of this study is organized as follows. Section~\ref{section2} reviews related literature. In Sections~\ref{section3}--\ref{section5}, we consider three variants of the proposed facility location model with non-priority, static priority, and dynamic priority, respectively. In each variant, we approximate the ensuing nonlinear and non-convex model as a MISOCP. Section~\ref{section6} reports the numerical results and  Section~\ref{section7} concludes the paper with a summary and directions for future research.

 \section{Literature Review}\label{section2}
The facility location problem for emergency service has been considered in three streams of literature, namely coverage models, median models, and center models.

Coverage models seek to maximize coverage with a given number of facilities, or alternatively, cover all demands with a minimum number of facilities. In an emergency service context, probabilistic models have been considered. 
\citet{40daskin1983maximum} considered that more than one facility may be available when a demand request arises and proposed to maximize the probability of coverage. 
Alternatively, \citet{40.3revelle1988reliability} modeled the coverage probability as a constraint (i.e., chance constraint) to meet a required level of reliability. 
These models were later extended by \citet{40.1batta1989maximal}, 
\citet{40.2repede1994developing}, 
and~\citet{85.01revelle1989maximum} 
to tackle other facility location problems. 
These models assume that the random business/idleness of all facilities are jointly independent, which may not be the case in reality. \citet{85.02marianov1994queuing} and~\citet{85marianov1996queueing} relaxed this assumption by using queues to estimate  the service busy fraction around the demand node.
Moreover, \citet{85.3marianov1998probabilistic} modeled each facility as an $M/M/k$ queue and explicitly imposed a constraint on waiting time or queue length to ensure the quality of service.
\citet{85.1marianov2002location} later extended the same model by determining the number of servers deployed at each facility instead of giving it  a priori. 


Median models locate facilities to minimize the total travel time and waiting time of customers. 
To compute customer waiting time,~\citet{80.1wang2002algorithms} and~\citet{80.11berman2006locating} modeled each opened facility as an $M/M/1$ queue. 
Since this leads to a nonlinear formulation, these studies applied heuristic algorithms. A notable attempt to provide more precise solutions is from 
\citet{100elhedhli2006service}, who recast the nonlinear formulation via piecewise linear approximations and proposed a cutting plane algorithm to find global optimum. 
Later,~\citet{98} extended this approach to $M/G/1$ queues. More recently, \citet{94.1ahmadi2017convexification} recast the same model as a MISOCP and \citet{94.4ahmadi2019linear} derived various linear reformulation of the model, as well as valid inequalities to strengthen the ensuing formulation. In addition,~\citet{94ahmadi2018location} modeled the service time as a function of the capacity of each facility and reformulated the nonlinear model as a MISOCP. 
Different from the aforementioned median models, this paper considers queues with \emph{mobile} servers (i.e., the drones) that travel to customers to deliver service. As a result, the service time depends not only on the waiting time in queues but also the server travel time, which further depends on which facility the demand is assigned to. For mobile servers, \citet{80.3berman1985optimal} studied a single facility with an $M/G/1$ queue. Other facility location models with a single mobile server include~\citet{chiu1985locating}, \citet{81.2berman1986location}, and~\citet{berman1987stochastic}, and~\citet{80.4batta1989location} extended the model to have $k$ servers at a single facility.

Center models, which this paper employs, pay more attention to an equitable quality of service among all demand nodes than the median models. In other words, our model explicitly takes into account the equity and priority among demand nodes, which is useful in an emergency delivery context. To the best of our knowledge, center models have received limited attention in existing literature. Examples of such models include \citet{85.7revelle1989maximum}, who sought to minimize the maximum service time and employed chance constraints to ensure the quality of service. More recently,~\citet{80aboolian2009multiple} modeled the congestion of facilities using an $M/M/k$ queue. They assumed the service rate of the queue to be pre-specified, while our service rates are decision-dependent.

Applications of drones in delivery service has attracted significant research attention in the recent past. 
Reviews of drone operational models can be found in  \citet{39otto2018optimization}, \citet{39.1macrina2020drone}, and \citet{39.3chung2020optimization}. The majority of the existing literature concentrates on routing problems, including traveling salesman problem with drones~\citep{8murray2015flying,22ferrandez2016optimization,17ha2018min,20agatz2018optimization,37poikonen2019branch,58salama2020joint} and vehicle routing problems with drones~\citep{24wang2017vehicle, 27poikonen2017vehicle, 28daknama2017vehicle, 57wang2019vehicle, 59kitjacharoenchai2020two}. However, location models with drones, especially for emergency service, have received much less attention by far. \citet{5scott2017drone} summarized the applications of drone delivery in healthcare industry. 
\cite{59} studied a multi-level facility location problem considering drones, recharge stations, and relief centers without congestion. In contrast,~\citet{8boutilier2019response} used an $M/M/k$ queue to model the congestion of a drone-based system for automated external defibrillators delivery. They also introduced a chance constraint to guarantee the service level. Our work is closely related to 
\citet{103.b}, who modeled the system congestion through an $M/G/k$ queue for each facility, analyzed the uncertain characteristics using the fuzzy theory, and provided a genetic algorithm. This paper is different from~\citet{103.b} in three aspects: 
(i) we adopt a center model for equitable emergency delivery;
(ii) we consider $M/G/1$ queues in the center model and derive an exact reformulation as a MISOCP;
(iii) we introduce static and dynamic priorities into the queuing discipline.

Lastly, demand priorities have been addressed in emergency logistics problems, including vehicle routing (see, e.g.,~\citet{20ghannadpour2014multi}, \citet{112.6oran2012location}, and \citet{112.1zhu2019emergency}), location routing (see, e.g.,~\citet{99.2li2018single}), and coverage models (see, e.g.,~\citet{111silva2008locating}, \citet{111.3jayaswal2017facility}, \citet{112cao2011emergency}, \citet{112.6oran2012location}, and  \citet{112.7mukhopadhyay2017prioritized}). In contrast, we consider both static and dynamic priorities in a center model.

\section{Non-Priority Model}\label{section3}
We consider a facility location model with no priorities. That is, we employ a queue to model the congestion of each server and adopt the first-come-first-serve (FCFS) discipline. Specifically, we consider a set $I$ of demand nodes and a set $J$ of candidate facility locations (to provide relief supplies via drones). We denote by $t_{ij}$ the travel time of a drone from demand node $i$ to facility $j$. We assume that $t_{ij}$ is known and deterministic for all $(i,j)$ pairs. In addition, we denote by $t_{max}$ the range of drones. That is, if the delivery time $t_{ij}$ exceeds $t_{max}$, then drones from facility $j$ do not deliver to demand node $i$.

Our model seeks to make three decisions: 
(i) $x$ are binary variables indicating the setup of  facilities, such that $x_j = 1$ if we open a facility in location $j$ and $x_j = 0$ otherwise;
(ii) $y$ are binary variables for the assignment of demand nodes, such that $y_{ij} = 1$ if demand node $i$ is assigned to facility $j$ and $y_{ij} = 0$ otherwise; and 
(iii) $k_j$ is an integer variable representing the number of drones deployed to facility $j$.

To model the system congestion, we assume that the demand at each node $i$ is generated from a Poisson process with rate $\lambda_i$. Then, the stream of demands arriving at facility $j$ is also Poisson with rate 
\begin{equation} \label{gamma}
\gamma_{j}=\sum_{i\in I}\lambda_iy_{ij}
\end{equation}
for all $j \in J$. It follows that, on average, each drone at facility $j$ spends time
\[
\dfrac{\sum_{i\in I}\lambda_i t_{ij}y_{ij}}{\sum_{i\in I}\lambda_i y_{ij}}
\]
on each delivery. As we equip each facility with $k_j$ drones, it appears appealing to model it as an $M/G/k_j$ queue, where the service time distribution is general. Unfortunately, it is not computational viable to compute the steady-state probability of the queue~\citet{tijms1981approximations} even if the decision variable $k_j$ is pre-specified, let alone incorporating such information into an optimization model as we seek to do. In this paper, we instead adopt an $M/G/1$ queue to approximate the congestion at each facility largely because of its computational practicality~\citet{book5.1,0.1}. As a result, the inter-service time, denoted by $\tau_j$, has the following first and second moments:
\begin{align}
\mathbb{E}[\tau_{j}]&=\dfrac{\sum_{i\in I}\lambda_i  t_{ij}y_{ij}}{k_j\sum_{i\in I}\lambda_i y_{ij}} \label{tau},\\
\mathbb{E}[\tau_{j}^2]&=\dfrac{\sum_{i\in I}\lambda_i t_{ij}^2y_{ij}}{k_j^2\sum_{i\in I}\lambda_i y_{ij}}.\label{tau2}
\end{align}
Then, following from the Pollaczek-Khintchine formula (see, e.g.,~\citet{book0}), the expected waiting time of the queue at facility $j$ equals
\begin{equation*}
W_{j}=\dfrac{\gamma_j \mathbb{E}[\tau_{j}^2]}{2(1-\gamma_j \mathbb{E}[\tau_{j}^2])},
\end{equation*}
which is re-written in terms of our decision variables
\begin{equation}
W_{j} = \frac{\sum_{i\in I}\lambda_i t_{ij}^2 y_{ij}}
{2k_j(k_j-\sum_{i\in I}\lambda_it_{ij} y_{ij})}. \label{0wj}
\end{equation}
We are now ready to present our model without priority as follows.
\begin{subequations}
\begin{align}
(\text{\bf NP}) \qquad \min_{x,y,k} \ & \ \max_{i \in I, j \in J}\left\{t_{ij} y_{ij} + W_j \right\} \label{0obj1} \\
\text{s.t.} \ & \ \sum_{j\in J} y_{ij} = 1 \qquad \forall i \in I, \label{0yij} \\
& \ y_{ij} \leq x_j \qquad \forall i \in I, \forall j \in J, \label{0yx} \\
& \ \sum_{j\in J}y_{ij} t_{ij} \leq t_{max} \qquad \forall i\in I, \label{0t} \\
& \ \sum_{i\in I}\lambda_it_{ij}y_{ij}\leq k_{j}\qquad \forall j\in J, \label{0rho} \\
& \ \sum_{j \in J} k_j \leq K \label{cons:dron-ub} \\
& \ x_j, y_{ij} \in \{0,1\}, \ k_j \in \mathbb{Z}_+, \qquad \forall i\in I, \forall j\in J. \label{0xx}
\end{align}
\end{subequations}
The objective function \eqref{0obj1} minimizes the longest expected response time, which is the expected waiting time plus the travel time of the drone, among all demand nodes. Constraints \eqref{0yij}--(\ref{0t}) ensure that each demand node is assigned to a facility that is open and within the range of drones. Constraints (\ref{0rho}) ensure that the queue at each facility is stable. Finally, Constraints \eqref{cons:dron-ub} cap the number of drones we can deploy at opened facilities. The parameter $K$ can be decided, for example, by pre-solving a side optimization model that seeks the minimum number of drones to maintain the stability of all queues, i.e., $\min_{x,y,k}\{\sum_{j \in J}k_j: \ \text{\eqref{0yij}--\eqref{0rho}, \eqref{0xx}}\}$. Then, we can set $K$ to be $(1 + \alpha)$ times of the minimum number of drones thus obtained, where $\alpha \geq 0$ can be specified by a planner depending on resource abundance.

The objective function of model (NP), particularly the waiting time $W_j$ defined in~\eqref{0wj}, appears to be nonlinear and nonconvex. As a consequence, (NP) cannot be solved by off-the-shelf optimization solvers (e.g., Gurobi) and raises a concern of computational intractability. In the following theorem, we show that the epigraph of $W_j$ as a function of $k_j$ and $y_{ij}$ admits a second-order conic representation. This produces a MISOCP reformulation of (NP), which can directly be computed by optimization solvers.
\begin{theorem} \label{thm:np}
The optimal value and the set of optimal solutions of model (NP) coincide with those of the following MISOCP:
\begin{subequations}
\begin{align}
\min \ & \ Z_{\text{max}} \\
\text{s.t.} \ & \ Z_{\text{max}} \geq t_{ij}y_{ij} + W_j \qquad \forall i \in I, \forall j \in J, \\
& \ \left\Vert \begin{matrix}
    \sqrt{2\lambda_1} t_{1j} \theta_{1j} \\
    \vdots \\
    \sqrt{2\lambda_I} t_{Ij} \theta_{Ij} \\
    k_j-\sum_{i\in I}\lambda_it_{ij}y_{ij}-W_j \\
    \end{matrix} \right\Vert_2\le k_j-\sum_{i\in I}\lambda_it_{ij}y_{ij}+W_j \qquad \forall j\in J, \label{0wo} \\
& \ \left\Vert  \begin{matrix}
    2y_{ij} \\
    \theta_{ij} - \beta_{ij} \\
    \end{matrix} \right\Vert_2\le \theta_{ij} + \beta_{ij} \qquad \forall i\in I,  j\in J, \label{0a} \\
& \ \left\Vert  \begin{matrix}
    2\beta_{ij} \\
    1-k_j \\
    \end{matrix} \right\Vert_2\le 1+k_j \qquad \forall i\in I,  j\in J, \label{0ak} \\
& \ \text{\eqref{0yij}--\eqref{0xx}}.\label{np:0xx}
\end{align}
\end{subequations}
\end{theorem}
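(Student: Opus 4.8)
The plan is to prove the equivalence at the level of objective values and projected solution sets, rather than by any single algebraic manipulation of \eqref{0wj}. Both \eqref{0obj1} and the MISOCP objective minimize an epigraph variable dominating $t_{ij}y_{ij}+W_j$, where in (NP) the quantity $W_j$ is pinned to the Pollaczek--Khintchine value \eqref{0wj} (call it $W_j^{\mathrm{PK}}$), whereas in the MISOCP $W_j$ is a free variable constrained only by \eqref{0wo}--\eqref{0ak}. Hence it suffices to show that, on the common feasible region defined by \eqref{0yij}--\eqref{0xx}, the three conic constraints encode exactly the epigraph $\{(y,k,W_j):W_j\ge W_j^{\mathrm{PK}}\}$. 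Once this is established, minimizing pushes $W_j$ down to $W_j^{\mathrm{PK}}$, so the two optimal values and the two optimal $(x,y,k)$ sets coincide.

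First I would rewrite each second-order cone constraint in rotated form by squaring; nonnegativity of each right-hand side is automatic from the cone itself, so squaring loses nothing. Writing $b_j:=k_j-\sum_{i\in I}\lambda_i t_{ij}y_{ij}$, constraint \eqref{0wo} becomes the rotated cone $\sum_{i\in I}\lambda_i t_{ij}^2\theta_{ij}^2\le 2b_jW_j$ with $b_j,W_j\ge 0$; constraint \eqref{0a} becomes $y_{ij}^2\le\theta_{ij}\beta_{ij}$ with $\theta_{ij},\beta_{ij}\ge 0$; and constraint \eqref{0ak} becomes $\beta_{ij}^2\le k_j$. The sign $b_j\ge 0$ is consistent with the stability constraint \eqref{0rho}.

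Next I would chain the three inequalities. From \eqref{0a} and \eqref{0ak} one gets $\theta_{ij}\ge y_{ij}^2/\beta_{ij}$ and $\beta_{ij}\le\sqrt{k_j}$, hence $\theta_{ij}^2\ge y_{ij}^4/k_j$. The \emph{binary} restriction $y_{ij}\in\{0,1\}$ is the crux here: it yields the identity $y_{ij}^4=y_{ij}$, so $\theta_{ij}^2\ge y_{ij}/k_j$. Substituting into the rotated form of \eqref{0wo} gives $2b_jW_j\ge\sum_{i\in I}\lambda_i t_{ij}^2\theta_{ij}^2\ge \frac{1}{k_j}\sum_{i\in I}\lambda_i t_{ij}^2 y_{ij}$, i.e. $W_j\ge W_j^{\mathrm{PK}}$. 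This proves every MISOCP-feasible point has objective at least that of (NP) at the same $(x,y,k)$.

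For the reverse (tightness) direction, given any $(x,y,k)$ feasible to (NP) I would exhibit auxiliary values attaining equality: for each opened facility with $k_j\ge 1$ set $\beta_{ij}=\sqrt{k_j}$, $\theta_{ij}=y_{ij}/\sqrt{k_j}$, and $W_j=W_j^{\mathrm{PK}}$ (for unused facilities take $\theta_{ij}=\beta_{ij}=W_j=0$, which is trivially feasible). Using $y_{ij}^2=y_{ij}$ one verifies all three conic inequalities hold with equality, so this point is MISOCP-feasible with the same objective value, which closes the argument. The main obstacle is precisely this chaining step: the denominator of \eqref{0wj} is a product of the three quantities $k_j$, $b_j$, and $W_j$, which no single cone can capture, so the exactness rests entirely on the auxiliary variables $\theta_{ij},\beta_{ij}$ together with the integrality identity $y_{ij}^4=y_{ij}$ — this is what upgrades the nested rotated-cone relaxation from an approximation to an exact reformulation.
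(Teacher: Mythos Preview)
Your proposal is correct and follows essentially the same approach as the paper: both arguments recast \eqref{0wo}--\eqref{0ak} as rotated cones, introduce the auxiliaries $\theta_{ij},\beta_{ij}$, and invoke the binary identity $y_{ij}^4=y_{ij}$ to recover $\theta_{ij}^2\ge y_{ij}/k_j$ and hence the epigraph of $W_j^{\mathrm{PK}}$. The only cosmetic difference is that the paper derives the cones forward from the epigraph inequality via an ``if and only if'' chain, whereas you argue the two directions (validity and tightness) separately and supply an explicit witness for tightness.
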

\begin{proof}{Proof of Theorem~\ref{thm:np}.}
We start by rewriting the objective function~\eqref{0obj1} in the epigraphic form of $\min \{Z_{\text{max}}: \ Z_{\text{max}} \geq t_{ij}y_{ij} + W_j, \text{ for all } i \in I, j \in J\}$. Accordingly, we can rewrite the definition of $W_j$ in~\eqref{0wj} in the epigraphic form without loss of optimality:
\begin{equation*}
W_j \geq \frac{\sum_{i\in I}\lambda_i t_{ij}^2 y_{ij}}
{2k_j(k_j-\sum_{i\in I}\lambda_it_{ij} y_{ij})}.
\end{equation*}
In what follows, we assume that $k_j > 0$ and $k_j - \sum_{i\in I}\lambda_it_{ij} y_{ij} > 0$ and the result will remain valid when either of them equals zero. Then, multiplying both sides of the above inequality by $4(k_j - \sum_{i\in I}\lambda_it_{ij} y_{ij})$ yields that there exist $\theta_{ij} \geq 0$ for all $i \in I$ with
\begin{align}
4W_j\left(k_j-\sum_{i\in I}\lambda_it_{ij}y_{ij}\right) \ \geq \ & 2\sum_{i\in I}\lambda_it_{ij}^2\theta_{ij}^2, \label{0w1} \\
\theta_{ij}^2 \ \geq \ & \dfrac{y_{ij}}{k_j} \qquad \forall i \in I. \label{0theta}
\end{align}
Next, we recast~\eqref{0w1} and~\eqref{0theta} separately. For~\eqref{0w1}, it holds that
\begin{align*}
\text{\eqref{0w1}} \ \Longleftrightarrow \ & \ \left(k_j-\sum_{i\in I}\lambda_it_{ij}y_{ij}+W_j\right)^2 \geq 2\sum_{i\in I}\lambda_it_{ij}^2\theta_{ij}^2+\left(k_j-\sum_{i\in I}\lambda_it_{ij}y_{ij}-W_j\right)^2 \\
\Longleftrightarrow \ & \ \text{\eqref{0wo}}.
\end{align*}
For~\eqref{0theta}, we use the fact that $y_{ij}^4 = y_{ij}$ since $y_{ij}$ is binary, and it holds that
\begin{equation*}
\text{\eqref{0theta}} \ \Longleftrightarrow \ \exists \beta_{ij} \geq 0: \ \left\{\begin{array}{l}
    y_{ij} \leq \sqrt{\theta_{ij} \beta_{ij}} \\
    \beta_{ij} \leq \sqrt{k_j}
\end{array}
\right. \qquad \forall i \in I.
\end{equation*}
Then, following a similar procedure for recasting~\eqref{0w1}, we arrive at the representation~\eqref{0a}--\eqref{0ak} of~\eqref{0theta}. This completes the proof.\Halmos
\end{proof}

\section{Static Priority Model}	\label{section4}
In emergency delivery (e.g., after a natural disaster), the degrees of importance and urgency of supplies are highly heterogeneous. Materials that involve risks to human life, such as first-aid supplies and medications, may deserve higher priority over other materials like water and food. For example, China divides the emergency relief supplies into 3 major categories and 65 minor categories~\citep{NDRC}. 

This section extends the (NP) model to consider priorities among demands with different categories. Specifically, we categorize the demand requests into $R$ priority classes and a smaller index represents a higher priority. Note that this categorization only depends on the types of demand requests and not on time. Accordingly, we call the priority \emph{static} and shall generalize it to dynamic priority in Section~\ref{section5}. For each demand node $i \in I$, we denote by $v_{ir}$ the probability of the node requesting a class-$r$ demand with $\sum_{r \in R} v_{ir}=1$. Additionally, we allow to assign the demands from different classes to different facilities and extend the decision variables $y_{ij}$ to be $y^r_{ij}$, such that $y^r_{ij} = 1$ if we assign demands in class $r$ to facility $j$ and $y^r_{ij} = 0$ otherwise. Then, the arrival rate of the demands in class $r$ at facility $j$ becomes
\begin{equation} \label{1gamma}
\gamma_{jr}=\sum_{i\in I}\lambda_iv_{ir}y_{ij}^r
\end{equation}
for all $j \in J$. At each opened facility $j$, we consider a non-preemptive priority queue. That is, the queue dynamically maintains a list of demand requests in the order of their priorities and uses their arriving times as the tie breaker. Whenever a drone becomes available, it fulfills the top-ranked request. However, a drone does not interrupt (i.e., preempt) an ongoing delivery for a new request, even if the request has higher priority than what is being delivered. This is reasonable because otherwise drones would spend a significant portion of its time traveling for nothing. It follows that the inter-service time $\tau_{jr}$ of demands in class $r$ has the following first and second moments:
\begin{align}
E[\tau_{jr}]&=\dfrac{\sum_{i\in I}\lambda_i v_{ir} t_{ij}y_{ij}^r}{k_j\sum_{i\in I}\lambda_i v_{ir} y_{ij}^r} \label{1tau},\\
E[\tau_{jr}^2]&=\dfrac{\sum_{i\in I}\lambda_i v_{ir} t_{ij}^2y_{ij}^r}{k_j^2\sum_{i\in I}\lambda_i v_{ir}y_{ij}^r}. \label{1tau2}
\end{align}
Then, the expected waiting time $W_{jr}$ of class-$r$ demand at facility $j$ follows from existing results of $M/G/1$ queues with static, non-preemptive priority discipline (see, e.g.,~\citet[Ch. 4.4]{book0} 
\begin{align*}
W_{j1} & = \frac{\sum_{\ell=1}^R \gamma_{j\ell} \mathbb{E}[\tau^2_{j\ell}]}{2(1 - \gamma_{j1}\mathbb{E}[\tau_{j1}])}, \\
W_{jr} & = \dfrac{\sum_{\ell=1}^R \gamma_{j\ell} \mathbb{E}[\tau^2_{j\ell}]}{\left(1-\sum_{\ell=1}^r\gamma_{j\ell}\mathbb{E}[\tau_{j\ell}]\right)\left(1-\sum_{\ell=1}^{r-1}\gamma_{j\ell}\mathbb{E}[\tau_{j\ell}]\right)} \qquad \forall r \geq 2.
\end{align*}
Re-writing these formulae in terms of our decision variables yields
\begin{align}
W_{j1}
=&\frac{\sum_{\ell=1}^R\sum_{i\in I}\lambda_iv_{i\ell} t_{ij}^2 y_{ij}^{\ell}}{2k_j\left(k_j-\sum_{i\in I}\lambda_iv_{i1}t_{ij} y_{ij}^1\right)}, \label{1w1} \\
W_{jr}
=&\frac{\sum_{\ell=1}^R\sum_{i\in I}\lambda_iv_{i\ell} t_{ij}^2 y_{ij}^{\ell}}{2\left(k_j-\sum_{\ell=1}^r\sum_{i\in I}\lambda_iv_{i\ell}t_{ij} y_{ij}^{\ell}\right)\left(k_j-\sum_{\ell=1}^{r-1}\sum_{i\in I}\lambda_iv_{i\ell} t_{ij} y_{ij}^{\ell}\right)} \qquad \forall r \geq 2.\label{1w2}
\end{align}
This produces the following extension of model (NP) based on static priority:
\begin{subequations}
\begin{align}
(\text{\bf SP}) \qquad \min_{x,y,k} \ & \ \sum_{r=1}^R w_r \max_{i \in I, j \in J}\{t_{ij} y^r_{ij} + W_{jr}\} \\
\text{s.t.} \ & \ \sum_{j \in J} y^r_{ij} = 1 \qquad \forall i \in I, \forall r \in [R], \label{sp:con1} \\
& \ y_{ij}^r \leq x_j \qquad \forall i \in I, \forall j \in J, \forall r \in [R], \label{sp:con2} \\
& \ \sum_{j\in J}y^r_{ij} t_{ij} \leq t_{max} \qquad \forall i\in I, \forall r \in [R], \label{sp:con3} \\
& \ \sum_{r=1}^R \sum_{i\in I} \lambda_i t_{ij} y_{ij}\leq k_{j}\qquad \forall j\in J, \label{sp:con4} \\
& \ \sum_{j \in J} k_j \leq K \label{sp:con5} \\
& \ x_j, y^r_{ij} \in \{0,1\}, \ k_j \in \mathbb{Z}_+, \qquad \forall i\in I, \forall j\in J, \forall r \in [R]. \label{sp:con6}
\end{align}
\end{subequations}
In model (SP), a planner uses a weight $w_r$ to indicate the relative importance of the worst-case response time of class-$r$ demand requests, with $\sum_{r=1}^R w_r = 1$ and each $w_r \geq 0$. In addition, like in model (NP), the cap number $K$ of drones can be decided, e.g., by pre-solving a side model that seeks the minimum number of drones to maintain the stability of all queues.

As evidenced in the definitions~\eqref{1w1}--\eqref{1w2}, (SP) involves an even higher degree of non-linearity than (NP). Nevertheless, the following theorem shows that (SP) still admits a MISOCP representation and can be computed directly by an off-the-shelf solver.
\begin{theorem} \label{thm:sp}
The optimal value and the set of optimal solutions of model (SP) coincide with those of the following MISOCP:
\begin{subequations}
\begin{align}
\min \ & \ \sum_{r=1}^R w_r Z_r\\
\text{s.t.} \ & \ Z_r \geq t_{ij}y^r_{ij} + W_{jr} \qquad \forall i \in I, \forall j \in J, \forall r \in [R], \\
& \ \left\Vert \begin{matrix}
    \sqrt{2\lambda_1v_{11}} t_{1j} \theta^{\ell}_{1j} \\
    \vdots \\
    \sqrt{2\lambda_Iv_{IR}} t_{Ij} \theta^R_{Ij} \\
    k_j-\sum_{r=1}^R\sum_{i\in I}\lambda_iv_{i\ell}t_{ij}y^{\ell}_{ij}-W_{jr} \\
    \end{matrix} \right\Vert_2\leq k_j-\sum_{r=1}^R\sum_{i\in I}\lambda_iv_{i\ell}t_{ij}y^{\ell}_{ij}+W_{jr} \qquad \forall j\in J, \label{sp:ref-1} \\
& \ \left\Vert  \begin{matrix}
    2y^{\ell}_{ij} \\
    \theta^{\ell}_{ij} - \beta^{\ell}_{ij} \\
    \end{matrix} \right\Vert_2\leq \theta^{\ell}_{ij} + \beta^{\ell}_{ij} \qquad \forall i\in I, \forall j\in J, \forall r \in [R], \label{sp:ref-2} \\
& \ \left\Vert  \begin{matrix}
    2\beta^{\ell}_{ij} \\
    1-k_j + \sum_{\ell=1}^{r-1}\sum_{i\in I}\lambda_iv_{i\ell} t_{ij} y_{ij}^{\ell} \\
    \end{matrix} \right\Vert_2\leq 1+k_j - \sum_{\ell=1}^{r-1}\sum_{i\in I}\lambda_iv_{i\ell} t_{ij} y_{ij}^{\ell} \qquad \forall i\in I, \forall j\in J, \forall r \in [R], \label{sp:ref-3} \\
& \ \text{\eqref{sp:con1}--\eqref{sp:con6}}.
\end{align}
\end{subequations}
\end{theorem}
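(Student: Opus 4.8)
The plan is to follow the template of the proof of Theorem~\ref{thm:np} almost verbatim, after first unifying the two cases in the definition of $W_{jr}$. The first step is to introduce the shorthand $N_j := \sum_{\ell=1}^R\sum_{i\in I}\lambda_i v_{i\ell} t_{ij}^2 y_{ij}^\ell$ for the common numerator and the nested partial sums $D_{j,r} := k_j - \sum_{\ell=1}^r\sum_{i\in I}\lambda_i v_{i\ell} t_{ij} y_{ij}^\ell$ for $r \ge 1$, together with the convention $D_{j,0} := k_j$. With this notation, both \eqref{1w1} and \eqref{1w2} collapse into the single expression $W_{jr} = N_j/(2 D_{j,r}D_{j,r-1})$ valid for every $r \in [R]$ (the case $r=1$ using $D_{j,0}=k_j$). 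This is the observation that lets me treat all priority classes uniformly, and it also clarifies why the stability constraint \eqref{sp:con4}, which bounds the total load by $k_j$, guarantees $D_{j,r}>0$ for all $r$.

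Next, as in Theorem~\ref{thm:np}, I would put the objective in epigraphic form, $\min\bigl\{\sum_{r=1}^R w_r Z_r : Z_r \ge t_{ij}y_{ij}^r + W_{jr}\ \forall\, i,j\bigr\}$, and replace each defining equation $W_{jr} = N_j/(2 D_{j,r}D_{j,r-1})$ by its epigraph $W_{jr}\ge N_j/(2 D_{j,r}D_{j,r-1})$ without loss of optimality, since the objective is nondecreasing in each $W_{jr}$. Assuming $D_{j,r}>0$ and treating the boundary exactly as in the (NP) argument, multiplying through by $2 D_{j,r}$ and introducing auxiliary variables $\theta_{ij}^r\ge0$ decouples the two sources of nonlinearity into (i) a product constraint $W_{jr}D_{j,r}\ge \tfrac12\sum_{\ell,i}\lambda_i v_{i\ell}t_{ij}^2(\theta_{ij}^r)^2$ and (ii) a ratio constraint $(\theta_{ij}^r)^2\ge y_{ij}^r/D_{j,r-1}$. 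Constraint (i) is a rotated second-order cone in $(W_{jr},D_{j,r},\theta^r)$; applying the same rotated-cone identity used to obtain \eqref{0wo}, now with the roles of the two positive factors played by $a=D_{j,r}$ and $b=W_{jr}$, produces exactly \eqref{sp:ref-1} (the $\sqrt{2}$ scaling of the $\theta$-entries absorbs the factor $\tfrac12$).

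For the ratio constraint (ii) I would reuse the binary trick of Theorem~\ref{thm:np} without change: since $y_{ij}^r\in\{0,1\}$ we have $(y_{ij}^r)^4 = y_{ij}^r$, so $(\theta_{ij}^r)^2\ge y_{ij}^r/D_{j,r-1}$ is equivalent to the existence of $\beta_{ij}^r\ge0$ with $y_{ij}^r\le\sqrt{\theta_{ij}^r\beta_{ij}^r}$ and $\beta_{ij}^r\le\sqrt{D_{j,r-1}}$. Recasting these two as rotated cones gives \eqref{sp:ref-2} and \eqref{sp:ref-3}; the only difference from \eqref{0a}--\eqref{0ak} is that the single variable $k_j$ is replaced by the affine expression $D_{j,r-1}=k_j-\sum_{\ell=1}^{r-1}\sum_{i}\lambda_i v_{i\ell}t_{ij}y_{ij}^\ell$, which is precisely why \eqref{sp:ref-3} carries the extra term $\sum_{\ell=1}^{r-1}\sum_i\lambda_i v_{i\ell}t_{ij}y_{ij}^\ell$ inside the cone.

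The conceptual content is therefore identical to Theorem~\ref{thm:np}, and the main obstacle is organizational bookkeeping rather than a new idea. I expect the delicate points to be: (a) verifying that the unified expression $W_{jr}=N_j/(2D_{j,r}D_{j,r-1})$ is indeed correct for $r=1$ under the convention $D_{j,0}=k_j$, so that one reformulation template covers every class; (b) tracking the partial-sum structure correctly, since \eqref{sp:ref-1} must retain the factor $D_{j,r}$ while the ratio substitution absorbs $D_{j,r-1}$, and interchanging the two would be incorrect; and (c) confirming that replacing a single variable by the affine expression $D_{j,r-1}$ in the rotated-cone identity preserves validity, which holds as long as that expression stays nonnegative, a property ensured by \eqref{sp:con4}.
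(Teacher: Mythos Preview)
Your proposal is correct and matches the paper's proof step for step; your unification of the cases $r=1$ and $r\ge2$ via the convention $D_{j,0}=k_j$ is a clean touch the paper forgoes (it simply declares the $r=1$ case analogous and omits it). One notational slip to fix when you write it up: the auxiliary $\theta$'s and the ratio constraint must be indexed by the numerator class $\ell\in[R]$, not the outer priority $r$---that is, $(\theta_{ij}^{\ell})^2 \ge y_{ij}^{\ell}/D_{j,r-1}$ for every $\ell$, since $N_j$ sums $y_{ij}^{\ell}$ over all classes---which is exactly what \eqref{sp:ref-1}--\eqref{sp:ref-3} encode.
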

\begin{proof}{Proof of Theorem~\ref{thm:sp}.}
We start by rewriting the objective function of (SP) in the epigraphic form of $\min\{\sum_{r=1}^R w_r Z_r: Z_r \geq t_{ij} y^r_{ij} + W_{jr}, \text{ for all } i \in I, j \in J, r \in [R]\}$. In what follows, we represent the definition of $W_{jr}$, as a function of decision variables $k_j$ and $y^{\ell}_{ij}$ shown in~\eqref{1w2}, as second-order conic constraints. The representation of~\eqref{1w1} is similar and omitted. To this end, we rewrite~\eqref{1w2} in its epigraphic form without loss of optimality:
\[
W_{jr} \geq \frac{\sum_{\ell=1}^R\sum_{i\in I}\lambda_iv_{i\ell} t_{ij}^2 y_{ij}^{\ell}}{2\left(k_j-\sum_{\ell=1}^r\sum_{i\in I}\lambda_iv_{i\ell}t_{ij} y_{ij}^{\ell}\right)\left(k_j-\sum_{\ell=1}^{r-1}\sum_{i\in I}\lambda_iv_{i\ell} t_{ij} y_{ij}^{\ell}\right)}.
\]
This inequality holds if and only if there exist $\theta^{\ell}_{ij} \geq 0$ for all $\ell \in [R]$ and $i \in I$ such that
\begin{subequations}
\begin{align}
2W_{jr}\left(k_j-\sum_{\ell=1}^r\sum_{i\in I}\lambda_iv_{i\ell}t_{ij} y_{ij}^{\ell}\right) \geq & \sum_{\ell=1}^R\sum_{i\in I}\lambda_iv_{i\ell} t_{ij}^2 \left(\theta^{\ell}_{ij}\right)^2, \label{sp:note-1} \\
\left(\theta^{\ell}_{ij}\right)^2 \geq & \frac{y^{\ell}_{ij}}{k_j-\sum_{\ell=1}^{r-1}\sum_{i\in I}\lambda_iv_{i\ell} t_{ij} y_{ij}^{\ell}} \qquad \forall i \in I, \forall r \in [R]. \label{sp:note-2}
\end{align}
\end{subequations}
We recast~\eqref{sp:note-1} and~\eqref{sp:note-2} separately. For~\eqref{sp:note-1}, it holds that
\begin{align*}
& \ \text{\eqref{sp:note-1}} \\
\Longleftrightarrow \ & \ \left(k_j-\sum_{\ell=1}^r\sum_{i\in I}\lambda_iv_{i\ell}t_{ij} y_{ij}^{\ell} + W_{jr}\right)^2 \geq 2\sum_{\ell=1}^R\sum_{i\in I}\lambda_iv_{i\ell} t_{ij}^2 \left(\theta^{\ell}_{ij}\right)^2+\left(k_j-\sum_{\ell=1}^r\sum_{i\in I}\lambda_iv_{i\ell}t_{ij} y_{ij}^{\ell} - W_{jr}\right)^2 \\
\Longleftrightarrow \ & \ \text{\eqref{sp:ref-1}}.
\end{align*}
For~\eqref{sp:note-2}, we use the fact that $(y^{\ell}_{ij})^4 = y^{\ell}_{ij}$ since $y^{\ell}_{ij}$ is binary, and it holds that
\begin{equation*}
\text{\eqref{sp:note-2}} \ \Longleftrightarrow \ \exists \beta^{\ell}_{ij} \geq 0: \ \left\{\begin{array}{l}
    y^{\ell}_{ij} \leq \sqrt{\theta^{\ell}_{ij} \beta^{\ell}_{ij}} \\
    \beta^{\ell}_{ij} \leq \sqrt{k_j-\sum_{\ell=1}^{r-1}\sum_{i\in I}\lambda_iv_{i\ell} t_{ij} y_{ij}^{\ell}}
\end{array}
\right. \qquad \forall i \in I, \forall r \in [R].
\end{equation*}
Then, following a similar procedure for recasting~\eqref{sp:note-1}, we arrive at the representation~\eqref{sp:ref-2}--\eqref{sp:ref-3} of~\eqref{sp:note-2}. This completes the proof.\Halmos
\end{proof}

\section{Dynamic Priority Model}\label{section5}
\subsection{Preliminaries}
 In static-priority disciplines, the priority value of a demand  is a  fixed inherent quality  independent of time. However, in some practical situations the priority value of a demand point is determined not only by its class, but also  the time it has spent in the queue.

\citet{jackson1960some} first studied the dynamic non-preemptive priority discipline and assumed priority function at time $t$ of the following form:
 \begin{equation}\label{jack}
     q_r(t)=a_r+(t-T_r) \qquad t>T_r,
 \end{equation}
with geometrically distributed interarrival and service time. In this function, parameter $a_r$ denotes the initial priority value and $T_r$ is the arrival time of a class $r$ customer. The server selects the next customer with the highest instantaneous priority value of $q_r(t)$. 

\citet{priorityQ6} dealt with another form, where the priority function is given by
\begin{gather}
    q_r(t)=a_r+b_r(t-T_r) \qquad t\ge T_r,\notag\\
    0<b_1\le b_2\le...\le b_r,\label{qrt}\\
    a_1>a_2>...>a_p,\notag
\end{gather}
based on the queuing system with Poisson arrival patterns and general service time. They derived an expression for the expected waiting time of each class of customers with different priorities, and the bounds of this expression were also given. 

In our problem, we choose to use the priority function of  \citet{jackson1960some}, which is a special case of (\ref{qrt}) when 
\begin{equation*}
    b_1=b_2=...=b_r=1.
\end{equation*}
Therefore, the upper bounds of expected waiting time of  each class  can be finally computed as \citep{priorityQ6}
\begin{align}
      \overline{W_{r}} &=W_{0} + \sum_{i=1}^{r-1}\rho\rho_{i}\Delta a_{ir} \qquad \forall j \in J,\forall r\in R, 
\end{align}
where 
\begin{equation}
    \Delta a_{ir}=a_i-a_r.
\end{equation}
$\Delta a_{ir}$ indicates the initial priority-class gap between class $i$ and $r$, and $W_0$ denotes the expected waiting time in the $M/G/1$ queuing system without special priority disciplines.
$\rho$ is the total  utilization of the  queuing system, and $\rho_i$ is the utilization of class $i$ customers.

\subsection{Model}
In a real-life situation, given the occurrence of a sudden disaster, the priorities of the affected areas can be assessed by  an evaluation index system \citep{chinese99.4}, according to the estimated extent of damage, population density, economic level, and other indexes associated with these areas. Although it is important to reduce the response time of high priority areas, the demand from lower priority areas should not wait endlessly after the high priorities. Different from the static priority problem, we consider dynamic 
priority for the area priority problem instead. 

We now define the notation specific to this model. The priority class of each demand node is given. Let $I_r\in I$ denote the set of all demand nodes with priority $r$.  We  adopt (\ref{jack}) as the  priority value function with $ a_1>a_2>...>a_r$. When there is an available drone looking for a new demand to deliver, the one with the highest priority value will be selected. The fraction that the drone is busy at facility $j$ with priority  $r$ demand takes the value
\begin{align*}
    \rho_{jr}=\dfrac{\sum_{i\in I_r} \lambda_i t_{ij}y_{ij}}{k_j}\qquad \forall j \in J,\forall r\in R,
\end{align*}
satisfying $\rho_j=\sum_{r\in R}\rho_{jr}$.

The expected waiting time of a demand in an $M/G/1$ system under a FCFS discipline at server $j$ is denoted by $W_{j}$, which has been given by formula (\ref{0wj}) and further transformed into (\ref{0theta})$\sim$(\ref{0ak}).
Since there is no simple closed-form expression of the expected waiting time of  demands with all priorities, the upper bounds are used in our problem. This kind of approach is reasonable, because  our objective is to minimize the weighted maximum response time, where the real value will not exceed the value obtained by the upper bounds from our model. 
Then the expected waiting time  at facility $j$ under dynamic priority discipline is approximated as \begin{align}
      W_{j1}& = W_{j}\qquad \forall j \in J,\label{dp:w1}\\
     W_{jr} & = W_{j} + \sum_{i=1}^{r-1}\rho_j\rho_{jr}\Delta a_{ir} \qquad \forall j \in J,\forall r\ge 2 \notag
\end{align}
By substituting our decision variables, we obtain
\begin{align}
      W_{jr} & = W_{j}+\sum_{l=1}^{r-1} \Delta a_{lr} \frac{(\sum_{i\in I}\lambda_i t_{ij} y_{ij}) \ (\sum_{i\in I_l}\lambda_i t_{ij} y_{ij})}{k_j^2} \qquad \forall j \in J,\forall r\ge 2.\label{dp:w2}
\end{align}

Similar to the (NP) model, we show the  following extension of model  based on dynamic priority:
\begin{subequations}
\begin{align}
(\text{\bf DP}) \qquad \min_{x,y,k} \ & \ \sum_{r=1}^R w_r \max_{i \in I_r, j \in J}\{t_{ij} y_{ij} + W_{jr}\} \\
\text{s.t.} \ & \ \text{\eqref{0yij}--\eqref{0xx}}.
\end{align}
\end{subequations}

 The weight $w_r$  indicating the relative importance of the worst-case response time of class-$r$ demand requests, still should satisfy $\sum_{r=1}^R w_r = 1$ and each $w_r \geq 0$. In addition, like in model (NP), the cap number $K$ of drones can be decided, e.g., by pre-solving a side model that seeks the minimum number of drones to maintain the stability of all queues.

Due to the complexity of equations~\eqref{dp:w2}, (DP) involves an even higher degree of non-linearity than (NP) and (SP). Nevertheless, the following theorem shows that (DP) still admits a MISOCP representation and can be computed directly by an off-the-shelf solver.

\begin{theorem} \label{thm:dp}
	The optimal value and the set of optimal solutions of model (DP) coincide with those of the following MISOCP:
	\begin{subequations}
		\begin{align}
		\min \ & \ \sum_{r=1}^R w_r Z_r \\
		\text{s.t.} \ & \ Z_r \geq t_{ij}y_{ij} + W_{jr} \qquad \forall r \in [R], \forall i \in I_r, \forall j \in J,  \\
		& \label{dp:con0}\
			\ W_{jr}= W_{j}+\sum_{l=1}^r\Delta a_{lr} Q_{jl} \qquad \forall j \in J, \forall r \ge 2, \\
		&\label{dp:con1}\
		    \left \Vert \begin{matrix}
		    2\pi_{11jr}\\
		    \vdots\\
		    2\pi_{II_rjr}\\
		    Q_{jr}-k_j
		    \end{matrix} \right\Vert
		    \le  Q_{jr}+k_j \qquad \forall j \in J, \forall r\in [R],\\
		& \label{dp:con2}\   
			\left\Vert \begin{matrix} 2y_{ij}\\ p_{i\ell jr} - \pi_{i\ell jr} 
			\end{matrix}\right\Vert
		    \le  p_{i\ell jr} + \pi_{i\ell jr} + 2(1 - y_{\ell j})\qquad \forall i\in I, \forall \ell \in I_r, \forall j\in J,\forall r\in [R],\\
		& \label{dp:con3}\
			\left\Vert\begin{matrix} 2p_{i\ell jr}\\ k_j - \frac{1}{h_{i\ell jr}}y_{\ell j}\end{matrix} \right\Vert
		    \le k_j + \frac{1}{h_{i\ell jr}}y_{\ell j}\qquad \forall i\in I, \forall \ell \in I_r, \forall j\in J,\forall r\in [R],\\
		& \ \text{\eqref{0wo}--\eqref{np:0xx},\eqref{dp:w1}}.
		\end{align}
	\end{subequations}
\end{theorem}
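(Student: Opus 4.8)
The plan is to mirror the epigraphic reformulation used in Theorems~\ref{thm:np} and~\ref{thm:sp}, adding one extra layer to handle the \emph{bilinear} cross-term in the dynamic-priority waiting time~\eqref{dp:w2}. First I would put the objective in epigraphic form by introducing $Z_r$ with $Z_r \ge t_{ij}y_{ij}+W_{jr}$ for all $r\in[R]$, $i\in I_r$, $j\in J$, and substitute~\eqref{dp:w1}--\eqref{dp:w2}. The FCFS baseline $W_{j1}=W_j$ is already second-order conic representable through~\eqref{0wo}--\eqref{np:0xx} from Theorem~\ref{thm:np}, so those constraints are inherited verbatim. What remains is to represent the summands $\sum_{l=1}^{r-1}\Delta a_{lr}\,Q_{jl}$, where each auxiliary variable $Q_{jl}$ must capture
\[
Q_{jl} \ \ge\ \frac{\bigl(\sum_{i\in I}\lambda_i t_{ij}y_{ij}\bigr)\bigl(\sum_{\ell\in I_l}\lambda_\ell t_{\ell j}y_{\ell j}\bigr)}{k_j^2}.
\]
Since $\Delta a_{lr}=a_l-a_r>0$ for $l<r$ and each $W_{jr}$ is minimized, replacing equality by the $\ge$ epigraphic relaxation is without loss of optimality, which produces the linear link~\eqref{dp:con0}.

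Next I would expand the numerator into $\sum_{i\in I}\sum_{\ell\in I_r}\lambda_i\lambda_\ell t_{ij}t_{\ell j}\,y_{ij}y_{\ell j}$ and introduce one scalar $\pi_{i\ell jr}$ per pair to decouple the divided product. Imposing $\sum_{i,\ell}\pi_{i\ell jr}^2\le Q_{jr}k_j$ is a rotated second-order cone, namely~\eqref{dp:con1}, and it then suffices to force $\pi_{i\ell jr}^2 \ge \lambda_i\lambda_\ell t_{ij}t_{\ell j}\,y_{ij}y_{\ell j}/k_j$ for each pair: chaining the two bounds gives $Q_{jr}k_j \ge (\text{numerator})/k_j$, i.e.\ the desired inequality. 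Because $Q_{jr}$ and each $\pi_{i\ell jr}^2$ are driven to their lower limits at optimality, equality holds there, so both the optimal value and the optimal solution set (projected onto $(x,y,k)$) are preserved.

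The crux, and the step I expect to be the main obstacle, is representing the per-pair bound $\pi^2 \ge c\,y_{ij}y_{\ell j}/k_j$ with $c=\lambda_i\lambda_\ell t_{ij}t_{\ell j}$, because it couples a \emph{product of two binaries} with a division by $k_j$. I would handle it with an intermediate $p_{i\ell jr}$ and the binary identity $y_{ij}^4=y_{ij}$, in the same spirit as the recast of $\theta^2\ge y/k$ in Theorem~\ref{thm:np}, but split into two cases keyed on $y_{\ell j}$. When $y_{\ell j}=1$, constraint~\eqref{dp:con3} enforces $p_{i\ell jr}^2\le k_j/h_{i\ell jr}$ and~\eqref{dp:con2} enforces $y_{ij}^2\le p_{i\ell jr}\pi_{i\ell jr}$; multiplying and using $y_{ij}^4=y_{ij}$ yields $\pi_{i\ell jr}^2\ge h_{i\ell jr}\,y_{ij}/k_j$, so I would set $h_{i\ell jr}=\lambda_i\lambda_\ell t_{ij}t_{\ell j}$ to recover the target. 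When $y_{\ell j}=0$ the bound should vanish:~\eqref{dp:con3} then forces $p_{i\ell jr}=0$, and the big-$M$ term $2(1-y_{\ell j})$ in~\eqref{dp:con2} relaxes that constraint so $\pi_{i\ell jr}=0$ is feasible and the pair contributes nothing to~\eqref{dp:con1}. The delicate points to verify are that this big-$M$ is simultaneously tight enough to keep the $y_{\ell j}=1$ case exact and large enough to fully deactivate the $y_{\ell j}=0$ case, and that each displayed nonconvex inequality of the form $ab\ge c^2$ is equivalent to its stated norm form via the identity $(a+b)^2\ge 4c^2+(a-b)^2$. Assembling~\eqref{dp:con0}--\eqref{dp:con3} with the inherited~\eqref{0wo}--\eqref{np:0xx} and~\eqref{dp:w1} completes the reformulation.
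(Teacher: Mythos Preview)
Your proposal is correct and follows essentially the same route as the paper: epigraphic lift of the objective, introduction of $Q_{jr}$ with the $\ge$ relaxation justified by $\Delta a_{lr}>0$, the rotated cone $Q_{jr}k_j\ge\sum_{i,\ell}\pi_{i\ell jr}^2$, and the per-pair bound handled via the fourth-root trick $y_{ij}\le(\pi^2 k_j/h)^{1/4}$ split through an intermediate $p_{i\ell jr}$, with the two cases on $y_{\ell j}$ managed exactly by the $2(1-y_{\ell j})$ slack and the $y_{\ell j}/h_{i\ell jr}$ multiplier. The only cosmetic difference is that the paper presents the case split by first writing the $y_{\ell j}=1$ constraints and then \emph{revising} them to absorb $y_{\ell j}=0$, whereas you argue the two cases directly; the content is identical.
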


\begin{proof}{Proof of Theorem~\ref{thm:dp}.}
We start by rewriting the objective function of (DP) in the epigraphic form of $\min\{\sum_{r=1}^R w_r Z_r: Z_r \geq t_{ij} y_{ij} + W_{jr}, \text{ for all } r \in [R], i \in I_r, j \in J \}$. In what follows, we represent the definition of $W_{jr}$($r\ge 2$) in formula \eqref{dp:w2}  as a function of decision variables $k_j$ and $y_{ij}$ shown in~\eqref{1w2}, as second-order conic constraints. We introduce a auxiliary variable
	\begin{equation}\label{dp:Q-term} 
	            Q_{jr}=\frac{(\sum_{i\in I}\lambda_i t_{ij} y_{ij}) \ (\sum_{i\in I_r}\lambda_it_{ij} y_{ij})}{k_j^2} \qquad \forall j \in J , \forall r\in [R].
	\end{equation}
Without loss of optimality, we rewrite this equation in its epigraphic form, and obtain
\begin{numcases}{\text{\eqref{dp:w2}}\ \Longleftrightarrow \ :}
	  \text{\eqref{dp:con0},} &\notag\\
     Q_{jr}  \geq \frac{(\sum_{i\in I}\lambda_i t_{ij} y_{ij}) \ (\sum_{i\in I_r}\lambda_it_{ij} y_{ij})}{k_j^2} \qquad \forall j \in J , \forall r\in [R],\notag &
     \end{numcases}
     
\begin{numcases}{\quad\ \Longleftrightarrow \ :}
	  \text{\eqref{dp:con0},} &\notag\\
     Q_{jr} k_j \geq \sum_{i \in I} \sum_{\ell \in I_r} \left(\frac{\lambda_i \lambda_{\ell} t_{ij} t_{\ell j} y_{ij} y_{\ell j}}{k_j}\right) \qquad \forall j \in J, \forall r\in [R]. \label{dp:Q-term}&
\end{numcases}

Next, we give the procedure to recast \eqref{dp:Q-term} as a MISOCP. Let $h_{i\ell jr} = \lambda_i \lambda_\ell t_{ij} t_{\ell j}$, the inequality \eqref{dp:Q-term} holds if and only if there exits $\pi_{i\ell jr} \geq 0$, for all $r \in [R],\ell \in I_r, i\in I$ and $ j\in J$, such that
\begin{align}
Q_{jr} k_j &\geq \sum_{i \in I} \sum_{\ell \in I_r} \pi_{i\ell jr}^2 \qquad \forall i\in I,\forall \ell\in I_r, \forall j\in J, \forall r\in [R],\label{dp:q2}\\
    \pi_{i\ell jr}^2 &\geq \frac{h_{i\ell jr} y_{ij} y_{\ell j}}{k_j} \qquad \forall i\in I,\forall \ell\in I_r, \forall j\in J, \forall r\in [R]. \label{dp:pi-term}
\end{align}
Then \eqref{dp:q2} is easy to be reformulated to SOC constraints \eqref{dp:con1}. We now prove that so is inequality~\eqref{dp:pi-term}. Indeed, if $y_{\ell j} = 1$ then \eqref{dp:pi-term} can be re-written as
$$
y_{ij} \leq \left(\frac{1}{h_{i\ell jr}}\pi_{i\ell jr}^2 k_j\right)^{1/4} \qquad \forall i\in I,\forall \ell \in I_r,\forall j\in J,\forall r\in [R].
$$
This inequality holds if and only if there exists an $p_{i\ell jr} \geq 0$ such that
	\begin{align*}
	    (2y_{ij})^2 \leq & (p_{i\ell jr} + \pi_{i\ell jr})^2 - (p_{i\ell jr} - \pi_{i\ell jr})^2 \qquad \forall i\in I, \forall \ell \in I_r, \forall j\in J, \forall r\in [R],\\
	    (2p_{i\ell jr})^2 \leq & \left(k_j + \frac{1}{h_{i\ell jr}}\right)^2 - \left(k_j - \frac{1}{h_{i\ell jr}}\right)^2 \qquad \forall i \in I, \forall \ell \in I_r ,\forall j\in J,\forall r\in [R].
	\end{align*}
It remains to incorporate the case that $y_{\ell j} = 0$. To this end, we revise the above two inequalities as
\begin{align*}
    &(2y_{ij})^2 \leq (p_{i\ell jr} + \pi_{i\ell jr} + 2(1 - y_{\ell j}))^2 - (p_{i\ell jr} - \pi_{i\ell jr})^2 \qquad \forall i\in I, \forall  \ell \in I_r, \forall j\in J,\forall r\in [R], \\
    &(2p_{i\ell jr})^2 \leq  \left(k_j + \frac{1}{h_{i\ell jr}}y_{\ell j}\right)^2 - \left(k_j - \frac{1}{h_{i\ell jr}}y_{\ell j}\right)^2\qquad \forall i\in I, \forall \ell \in I_r,\forall j\in J,\forall r\in [R].
\end{align*}
both of which are second-order conic representable. The corresponding standard-form SOC constrains are \eqref{dp:con2}--\eqref{dp:con3}.  This completes the proof.\Halmos
\end{proof}

\section{Computational study} \label{section6}
In this section,  we conduct the  computational study  to analyze the performance of the proposed  location-allocation problems with congestion under non-priority, static priority, and dynamic  priority disciplines. Since $M/G/1$ queuing system is applied to approximate  $M/G/k$  queue, we use the simulation method to evaluate the system performance. This approach combines optimization model and simulation,  which has been used in several studies \citep{140.1fujiwara1987ambulance,140.2goldberg1990validating,140harewood2002emergency}. Our research builds the simulators of the $M/G/k$ queuing system  based on  the three disciplines respectively.  The simulators are built in the  SimPy with Python. The optimal solutions of  the mathematical programming models together with other initial settings are input in the simulators. The total running time of each simulation process is set to be 30,000 time units to ensure the stable state of the queuing system. Then the average waiting time of each priority class at each center obtained from the simulation together with the travel time is used to analyze the performance.

In section \ref{static},  we investigate the impact of the number of drones and the weight of priorities on  response time  of the static priority problem and compare the performance between the non-priority and static priority system.
In section \ref{dynamic}, sensitivity analysis is conducted for the dynamic priority problem. The performance of the system is compared to that of static priority and non-priority system.

The deduced SOCP formulations are solved by Gurobi Optimizer version 9.0.1. All tests are performed on a personal computer with  2.9 GHz processor and 8 GB memory, running Windows 10 operating system. In all non-priority, static priority and dynamic priority problems, the coordinates of both the candidate facilities and demand nodes are randomly generated in a uniform distribution $U~(0,30)$. Drone speed is set to  80 km/h, and its endurance is  40 min. The default value of $w_1$ is set to $70\%$.

\subsection{Analysis of static priority problem}\label{static}
This section  gives the computational results of our  facility location-allocation problem with congestion and static priority. 
We explore the impacts of different coefficients on the performance of the system,
and evaluate the gap  between non-priority and static priority systems by comparing  response time and waiting time to the non-priority system.

 In all experiments, we use the instances comprised of 10 demand nodes with 2 priority classes  of materials at any node and 6 candidate service centers. The average arrival rate at demand node $i$ is randomly generated as $\lambda_i \sim U(0.6,1)$. The proportion for demands of priority class $1$ of each node $v_{i1}$  is produced by the uniform distribution of $(0,1)$, leading to $v_{i2}=1-v_{i1}$. The initial maximum total number of drones is set to  200.

\subsubsection{Impact of the number of drones}\label{6.1.1}
\ 
\newline
\indent
In this section, we investigate the impact of the number of drones on  response time in  the static priority system. Based on the aforementioned settings, we first solve the side model to get the  minimum number of drones $K^*$ associated with the stable queuing system. Then we run a set of experiments for (SP) using different values of $\alpha$ (0, 0.02, 0.05,  0.1, 0.2, 0.5, 1) to bound the  total number of drones, taking the value of $K^*(1+\alpha)$.
For each of the above settings, we can obtain the optimal decision of the location of the open centers, the allocation of the demands, and the deployment of the drones, all of which are further input into the simulator to access the system performance.

We select a typical instance to represent the results of both the optimization model and the simulation  in Figure \ref{s11}. The weighted expected response time $Z$ which is the objective function of  (SP)  goes down as the number of drones increases  in both situations. The values obtained from the optimization model  are a little higher than the simulation results,  but the difference is not that pronounced.
The weighted response time is extremely high when $\alpha$ is very small, and shows a sharp decrease as $\alpha$ rises. Finally, it becomes stable when $\alpha$ exceeds 0.2. The observation is associated with the fact that when  drones are particularly scarce, the waiting time of demands  takes a large  portion  in the response time. However, when drones become sufficient, the waiting time becomes really small even to $0$ while the travel time 
achieves the dominant role instead, leading to the stabilization of $Z$. The results suggest the unnecessity to distribute excessive drones at the centers which stands for a high cost. According to our experiments on majority instances, setting  $\alpha$ around 0.2   can decrease the response time significantly without incurring too much cost.

\begin{figure}[htbp]
\centering 
\includegraphics[width=3.5in]{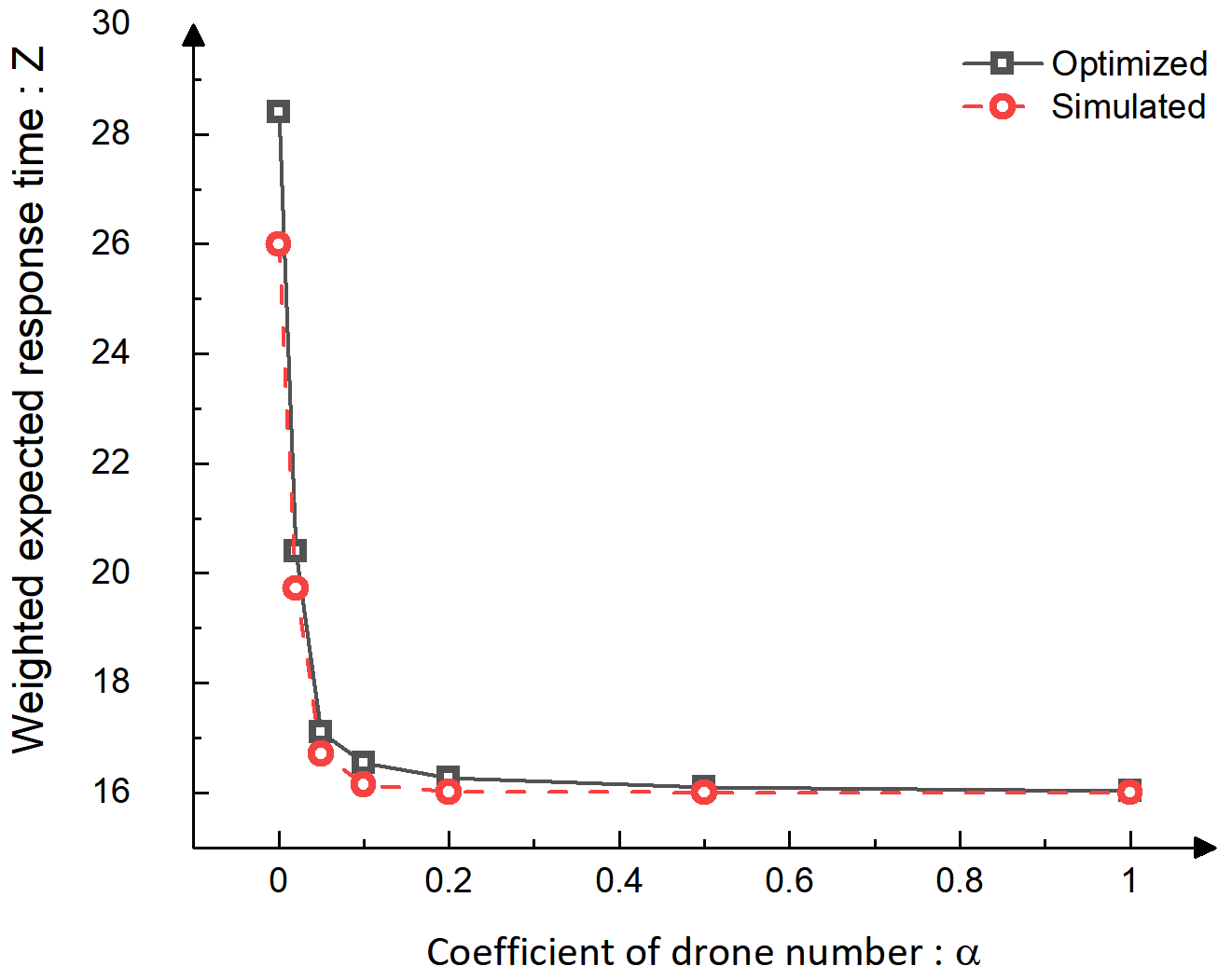} 
\caption{Weighted  expected response time $Z$ under different $\alpha$  (0, 0.02, 0.05,  0.1, 0.2, 0.5, 1)} 
\label{s11} 
\end{figure}

To analyze the impact of drone number on each priority class, Figure \ref{s12} provides the results for 10 randomly generated instances under each value of $\alpha$.  Without specific denoting, the results are obtained through the optimization-simulation approach.  $Z_r$  is the maximum  expected  response time among all demand nodes with priority $r$.  The mean value of all instances  is shown as the middle line in the area between  the upper and lower quartiles. The variation trend of class 2 demand is similar to that of $Z$, which declines dramatically at first and  becomes stable later on. However, there is only a slight decrease in class 1 demand throughout. 
The reason is that  in the static priority queuing system, the demands from  class 1 have strict priority over the others. Once there is an available drone, the  highest priority demand will be delivered immediately, even the relatively small number of drones doesn't have a big influence. However, when drones are insufficient, it will take a long time to deliver all existing and successively 
emerging demands of class 1 before a drone is able to serve a  class 2 demand. Besides, it is noted that the maximum  response time of class 2 is always larger than  that of class 1 and finally becomes equal when   drones are sufficient.

\begin{figure}[!htb]
\centering 
\includegraphics[width=3.5in]{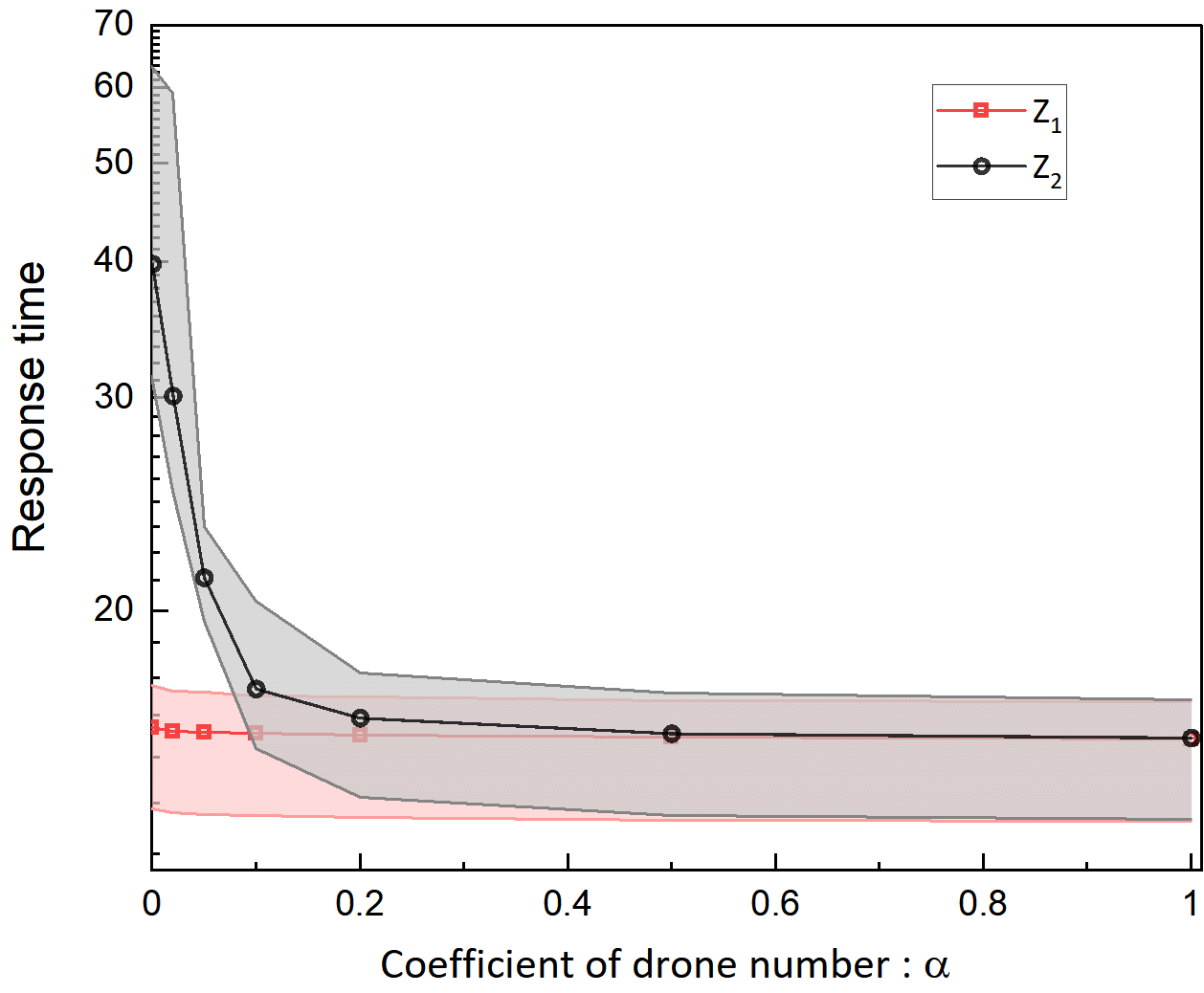} 
\caption{Maximum  expected  response time of class 1 and class 2 demands with different $\alpha$ (0, 0.02, 0.05,  0.1, 0.2, 0.5, 1) } 
\label{s12} 
\end{figure}

\subsubsection{Impact of the weight coefficients}
\ 
\newline
\indent
In this section, we study the impact of the weight of priorities $w_r$  on  system performance. In addition to  the maximum response time $Z_r$, we also evaluate the performance of total response time  $sumZ_r=\Sigma_{i\in I_r}\lambda_iv_{ir}y_{ij}^r(t_{ij}+W_{jr}) $, maximum  waiting time $W_r$, and total waiting time  $sumW_r=\Sigma_{i\in I_r}\lambda_iv_{ir}y_{ij}^rW_{jr}$ for each priority class.

We generate 10 instances randomly, and similar results are observed from all instances. The results of a typical instance are shown in Table \ref{table1}.  As expected, we see an increasing  trend of $Z_1$ and a decreasing  trend of $Z_2$ as the coefficient $w_1$ goes down, but the trends are not remarkable. 
We observe that other measures are also insensitive to $w_r$ except for some extreme parameters, such as $w_1=100$.
Indeed,  the weight of priorities $w_r$ can hardly influence the performance of any priority class in the static system since higher priority demands have already been respected.  
The performance of class 1 demands usually maintains a steady state, as illustrated in section \ref{6.1.1}, it is less affected by the number of drones.
However, $w_r$ matters in a dynamic priority system, as we will later see in Table \ref{table2}.

\begin{table}[htbp]
\renewcommand\arraystretch{0.6}
  \centering
  \caption{Static-priority system performance by varying $w_1$ with $\alpha=$ 0.1, 0.2, and 0.5}
    \begin{tabular}{cccccccccc}
    \toprule
    \multirow{2}[4]{*}{$\alpha$} & \multirow{2}[4]{*}{$w_1(\%)$} & \multicolumn{2}{c}{Response time} & \multicolumn{2}{c}{Total response time} & \multicolumn{2}{c}{Waiting time} & \multicolumn{2}{c}{Total waiting time} \\
\cmidrule(rl){3-4} \cmidrule(rl){5-6}\cmidrule(rl){7-8}\cmidrule(rl){9-10}         &       & $Z_1$    & $Z_2$    & $sumZ_1$ & $sumZ_2$ & $W_1$    & $W_2$    & $sumW_1$ & $sumW_2$ \\
    \midrule
    0.1   & 100   & 11.185  & 15.882  & 34.403  & 52.874  & 0.603  & 6.882  & 1.903  & 20.374  \\
          & 99    & 11.243  & 15.882  & 34.178  & 54.199  & 0.468  & 6.882  & 1.678  & 21.699  \\
          & 70     & 11.269  & 14.647  & 34.149  & 51.475  & 0.468  & 6.647  & 1.649  & 18.975  \\
          & 50     & 11.269  & 14.647  & 34.149  & 51.475  & 0.468  & 6.647  & 1.649  & 18.975  \\
          & 30     & 11.269  & 14.647  & 34.149  & 51.475  & 0.468  & 6.647  & 1.649  & 18.975  \\
          & 10     & 11.269  & 14.647  & 34.149  & 51.475  & 0.468  & 6.647  & 1.649  & 18.975  \\
          & 0     & 11.269  & 14.647  & 34.149  & 51.475  & 0.468  & 6.647  & 1.649  & 18.975  \\
    \midrule
    0.2   & 100   & 11.110  & 15.882  & 34.291  & 52.094  & 0.603  & 6.882  & 1.791  & 19.594  \\
          & 99    & 11.155  & 13.405  & 34.082  & 44.721  & 0.477  & 5.405  & 1.582  & 12.221  \\
          & 70     & 11.185  & 11.738  & 33.935  & 41.275  & 0.394  & 2.974  & 1.335  & 9.775  \\
          & 50     & 11.185  & 11.738  & 33.935  & 41.275  & 0.394  & 2.974  & 1.335  & 9.775  \\
          & 30     & 11.185  & 11.738  & 33.935  & 41.275  & 0.394  & 2.974  & 1.335  & 9.775  \\
          & 10     & 11.185  & 11.738  & 33.935  & 41.275  & 0.394  & 2.974  & 1.335  & 9.775  \\
          & 0     & 11.185  & 11.738  & 33.935  & 41.275  & 0.394  & 2.974  & 1.335  & 9.775  \\
    \midrule
    0.5   & 100   & 11.033  & 25.071  & 33.879  & 63.587  & 0.430  & 12.071  & 1.379  & 30.087  \\
          & 99    & 11.053  & 11.084  & 33.598  & 40.377  & 0.350  & 2.974  & 1.098  & 7.877  \\
          & 70     & 11.053  & 11.084  & 33.598  & 40.377  & 0.350  & 2.974  & 1.098  & 7.877  \\
          & 50     & 11.053  & 11.084  & 33.598  & 40.377  & 0.350  & 2.974  & 1.098  & 7.877  \\
          & 30     & 11.053  & 11.084  & 33.598  & 40.377  & 0.350  & 2.974  & 1.098  & 7.877  \\
          & 10     & 11.053  & 11.084  & 33.598  & 40.377  & 0.350  & 2.974  & 1.098  & 7.877  \\
          & 0     & 16.521  & 11.057  & 36.326  & 40.349  & 3.521  & 2.974  & 2.826  & 7.849  \\
    \bottomrule
    \end{tabular}%
  \label{table1}%
\end{table}%

In order to reveal the relative changes of the two priority demands more intuitively, we use Figure \ref{s31} to display the  ratio of the total response time and waiting time of class 1 to class 2 demand by varying $w_1$ with $\alpha=$ 0.1, 0.2, and 0.5.
It is noted that the total waiting time of class 1 demands is relatively small 
compared to that of class 2, where the ratio is below $10\%$ with $\alpha=0.1$ and a little higher with larger $\alpha$. The decreasing trend of the ratio is not noticeable with  $w_1$ except at the extreme value.  The ratio of total response time is larger than that of total waiting time, while the variation trend is similar. Overall, the weight coefficient $w_r$  has less impact on the response time and waiting time  of both classes,  reflecting the nature of static priority. 

\begin{figure}[!htbp]
\centering
\subfigure[$\alpha=0.1$]{
\begin{minipage}[t]{0.33\linewidth}
\centering
\includegraphics[width=2in]{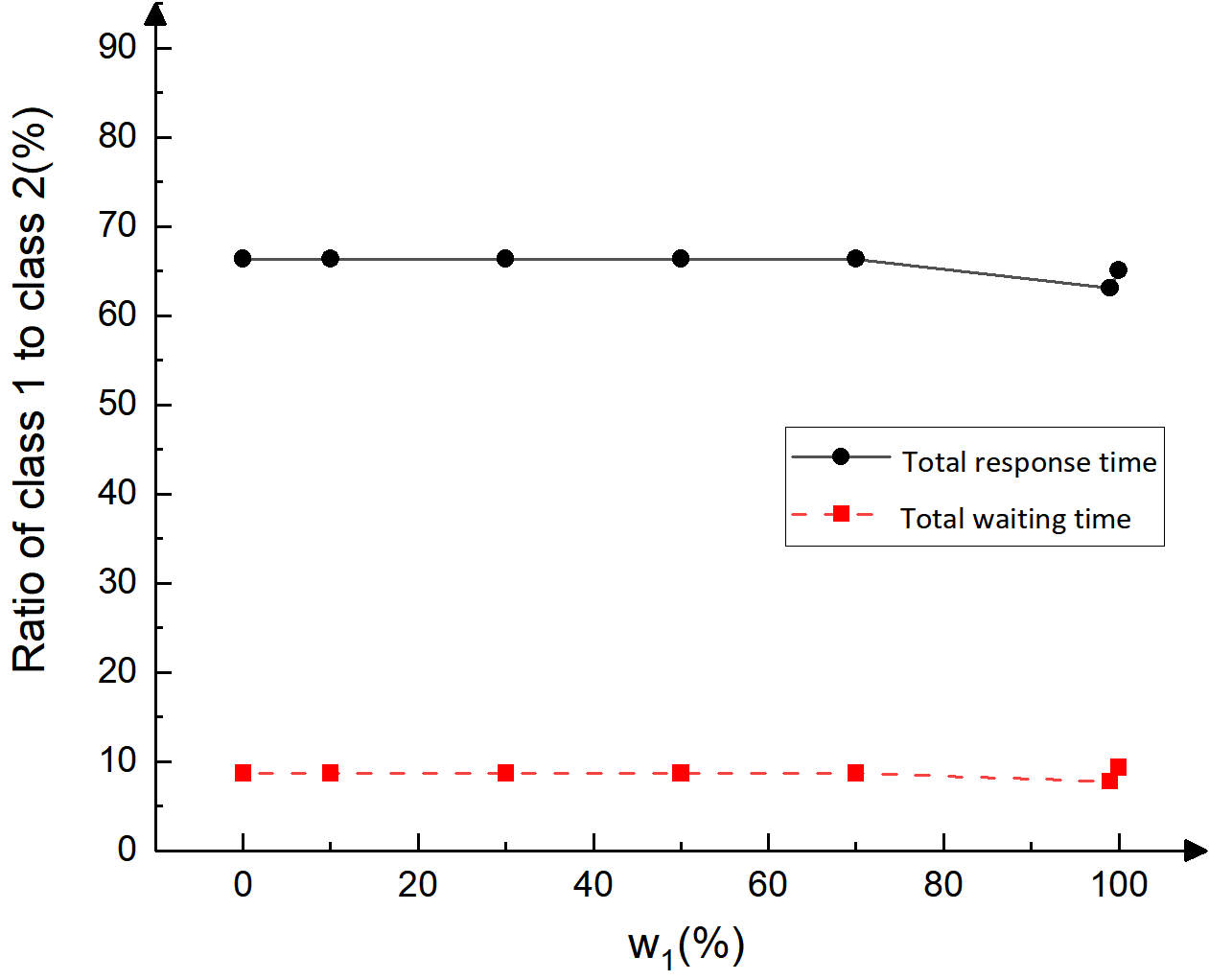}
\end{minipage}%
}%
\subfigure[$\alpha=0.2$]{
\begin{minipage}[t]{0.33\linewidth}
\centering
\includegraphics[width=2in]{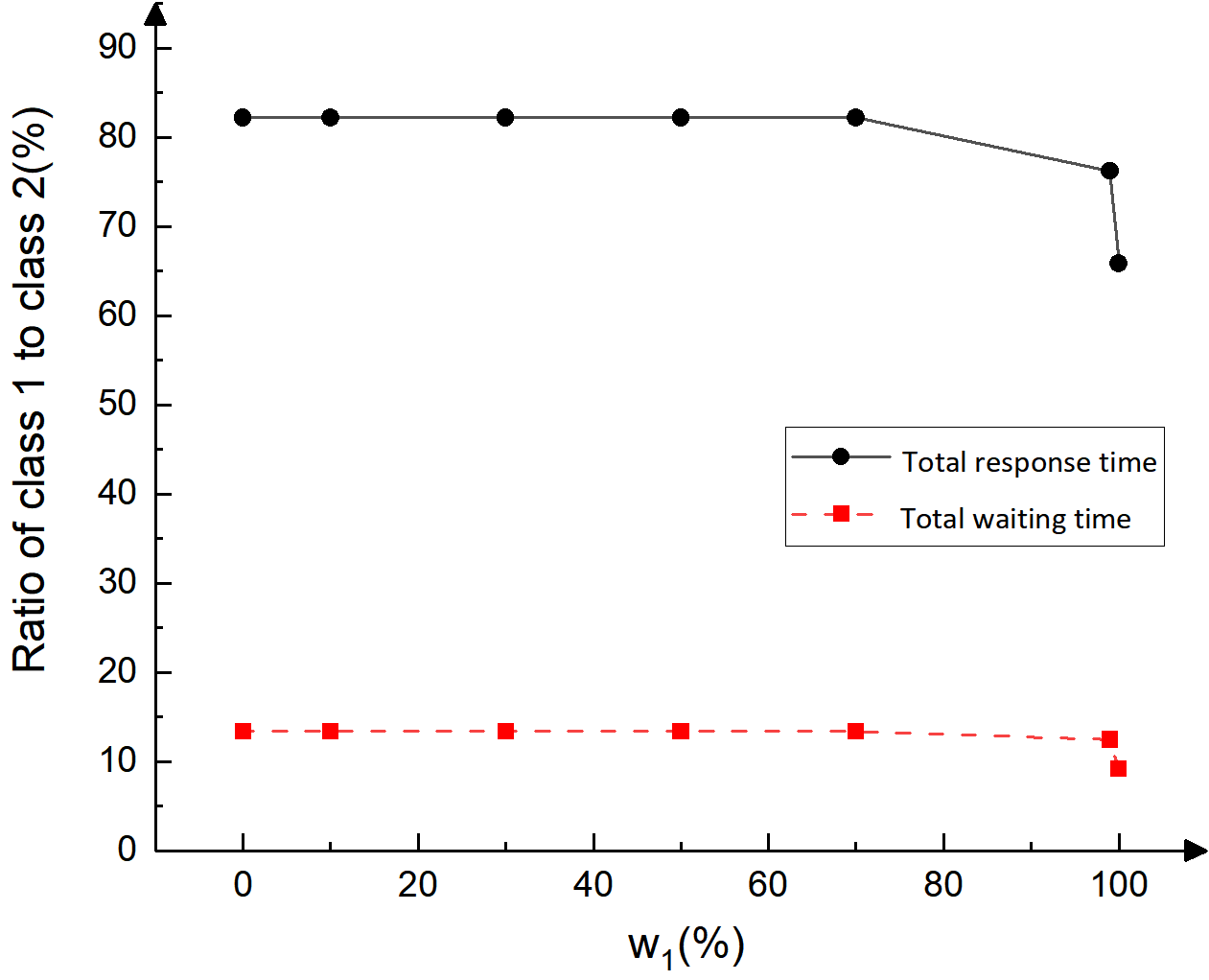}
\end{minipage}%
}%
\subfigure[$\alpha=0.5$]{
\begin{minipage}[t]{0.33\linewidth}
\centering
\includegraphics[width=2in]{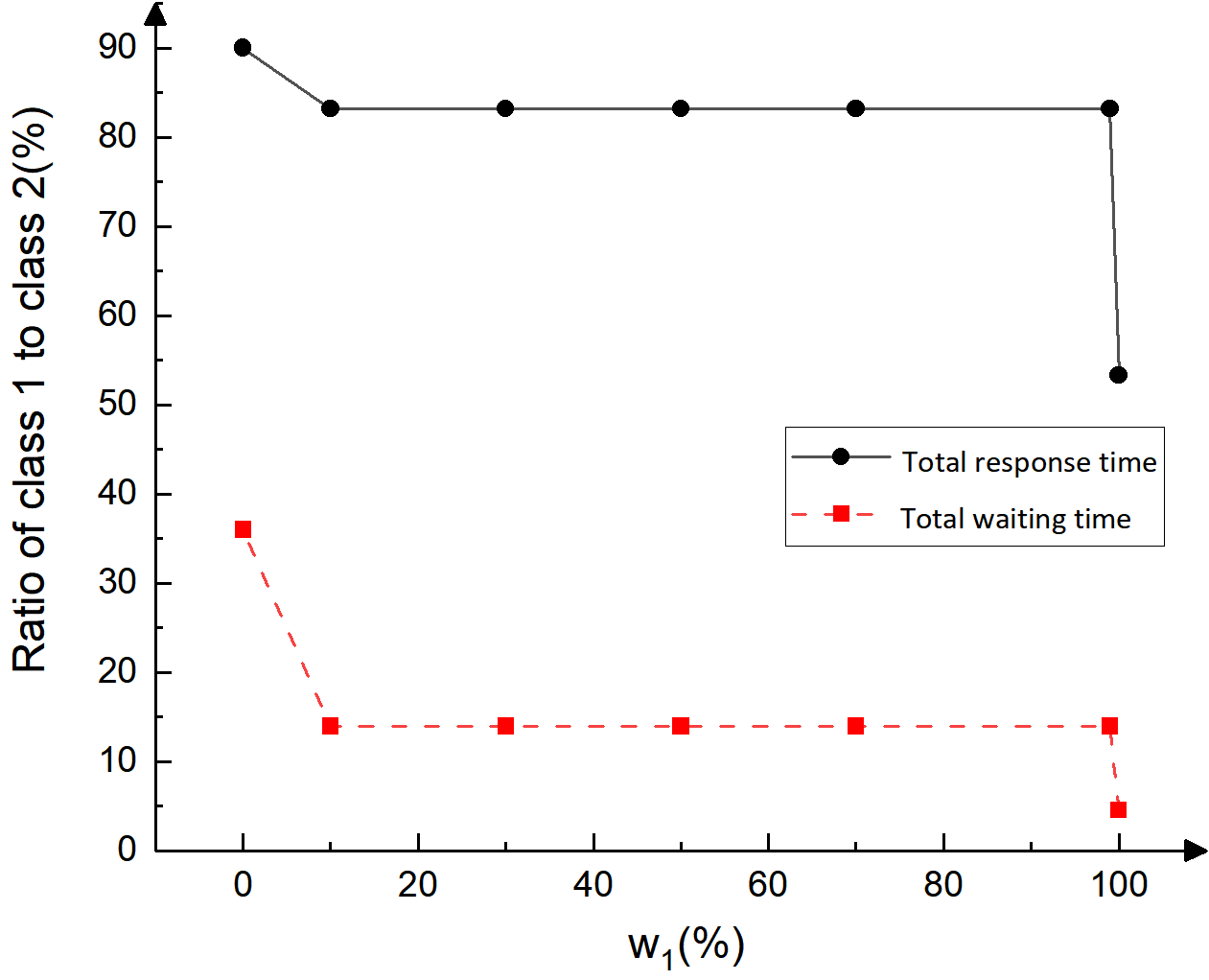}
\end{minipage}
}%
\centering
\caption{ Ratio of total response time and total waiting time of class 1 to class 2 demand by varying $w_1$ with $\alpha=$ 0.1, 0.2, and 0.5}
\label{s31}
\end{figure}

\subsubsection{Comparison with the non-priority system}\label{npsp}
\ 
\newline
\indent
In this section, we compare the performance of our  congested facility location-allocation system with  static priority to that with non-priority. Each demand is  given the same priority class in the non-priority system as in the static priority system so as to get the same objective function.
Figure \ref{s3}  presents the box plots of the gap of total response time and waiting time between the static priority system and non-priority system, which is calculated as follows:
\begin{equation*}
    Gap=\frac{T_{SP}-T_{NP}}{T_{NP}}\times 100\%,
\end{equation*}
where $T_{SP}$ represents the total response time or waiting time in the static priority system and $T_{NP}$ for that in the non-priority system. 20 instances are randomly generated with the parameter setting $\alpha=0.1$ and $w_1=70\%$. 

According to the negative values of $Gap$ for total response time and total waiting time of class 1 demand and the positive values of  class 2 demands, we can conclude  that introducing static priority discipline into the delivery system can help decrease both the response time and waiting time of the higher priority demands but incur extra time for lower priority demands. Therefore, when there are  demands of urgent need while others are not that emergent, it is more reasonable to apply the static priority.
It is also worth mentioning that  the gap in waiting time is more substantial than that in response time, the mean absolute value of  which are about  $15\%$ and $70\%$ respectively. This manifests that although static priority discipline has a considerable impact on waiting time, travel time also accounts for an essential part of the final response time. 

\begin{figure}[!htb]
\centering 
\includegraphics[width=3.5in]{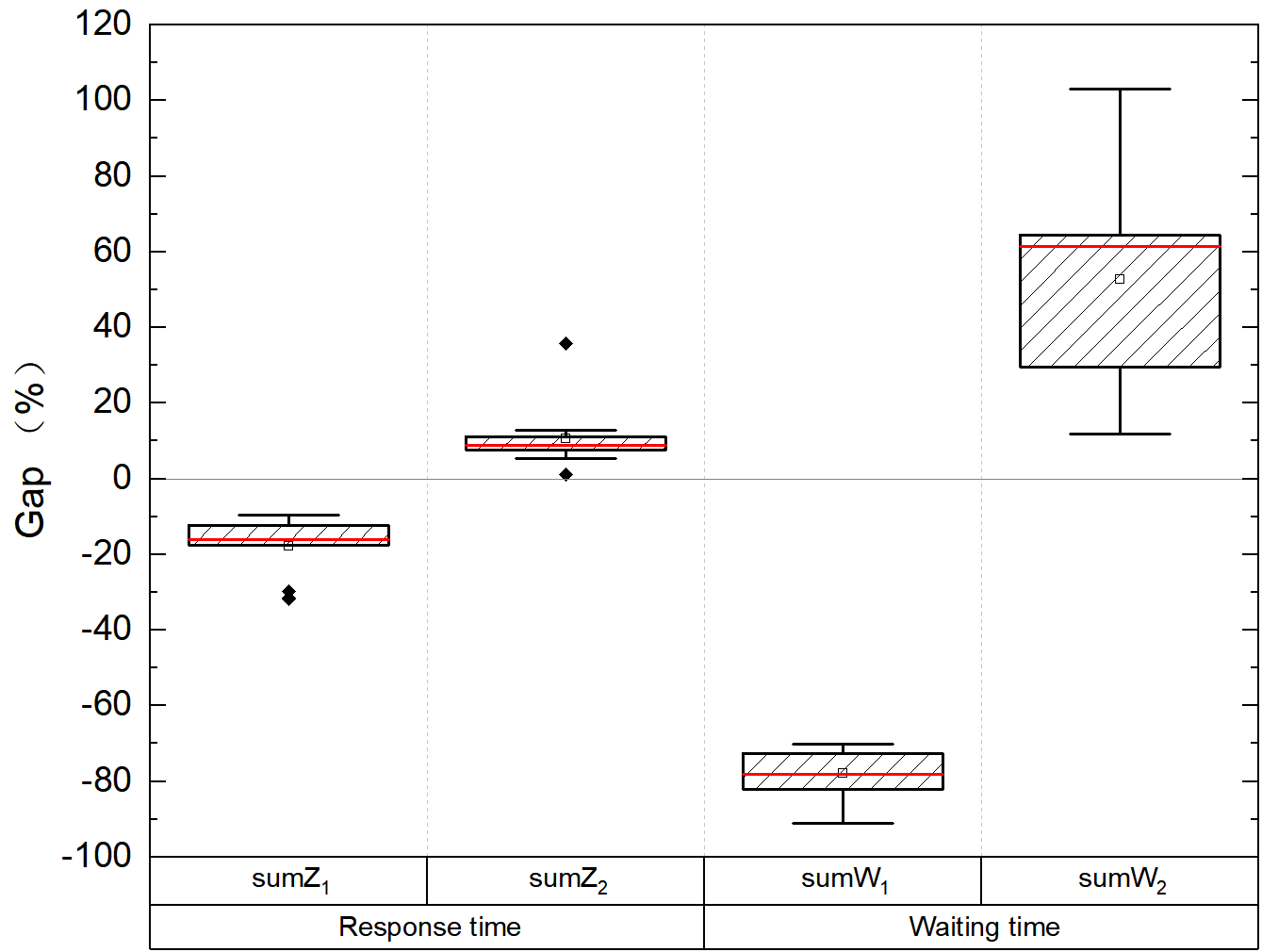} 
\caption{Gap of total response time and total waiting time between the static priority system and non-priority system ($\alpha=0.1$)} 
\label{s3} 
\end{figure}

\subsection{Analysis of dynamic priority problem}\label{dynamic}
In this section, we present the computational results of our  congested facility location-allocation problem with  dynamic priority. We investigate  the impact of  the total number of drones on the system performance under different settings of initial priority-class gap $\Delta a_{ij}$.   We also conduct an analysis of the weight coefficient $w_r$ in the  (DP) . The performance of the  dynamic system is compared with that of static priority and non-priority queuing-location system.

The instances are random generated with 11 demand nodes and 6 candidate service centers with the distribution mentioned before. Two priority classes are considered.  We randomly choose 6 demand nodes as class 1 demands, and the others belong to class 2. The average arrival rate at  demand node $i$  is randomly generated as $\lambda_i \sim U(0.1,0.5)$. The number of drones used in the system is limited to 200. Since only two priority classes are studied, we use $\Delta a$ instead of $\Delta a_{12}$ for simplicity.

\subsubsection{Impact of the number of drones}\label{6.2.1}
\ 
\newline
\indent
In this section, we exam the impact of the number of drones on the response time for the dynamic priority system. The side model is  solved first to get the  minimum number of drones $K^*$. Then  (DP)  is solved to optimality with  different number of drones by varying coefficient $\alpha \in \{0, 0.02, 0.05, 0.1, 0.2, 0.5, 1\}$ under different  initial priority-class gap $\Delta a\in \{ 3, 10, 20\}$. The impacts on the response time are analyzed through the simulation process with respect to the optimal results from   (DP) .

Figure \ref{d12} represents the results of a typical instance of both the optimization model and the simulation, which illustrates the impact of the total number of drones on the weighted expected response time $Z$  with different $\Delta a$. The decreasing trends of $Z$ under all settings of $\Delta a$ in both situations (optimization and simulation) are similar to those in the static priority queuing system, which fall  rapidly at the beginning and then become stable till the end. This result also implies that it is appropriate to deploy drones with $\alpha$ around 0.2 in the whole system. Since the upper bound is used in the optimization model of the  dynamic problems, the value of $Z$ is slightly  higher than that obtained from the simulation.  
Besides, we find that the curves with larger $\Delta a$ lie above or align with those with smaller $\Delta a$. This observation indicates that a larger initial priority gap between two classes  will lead to a larger weighted expected response time in general. All the three lines converge to a certain value as $\alpha$ approaches 1, which is associated with the fact that  the waiting time is extremely small and only travel time matters for the response time when drones are exceedingly sufficient.
\begin{figure}[!htbp]
    \centering
    \includegraphics[width=3.5in]{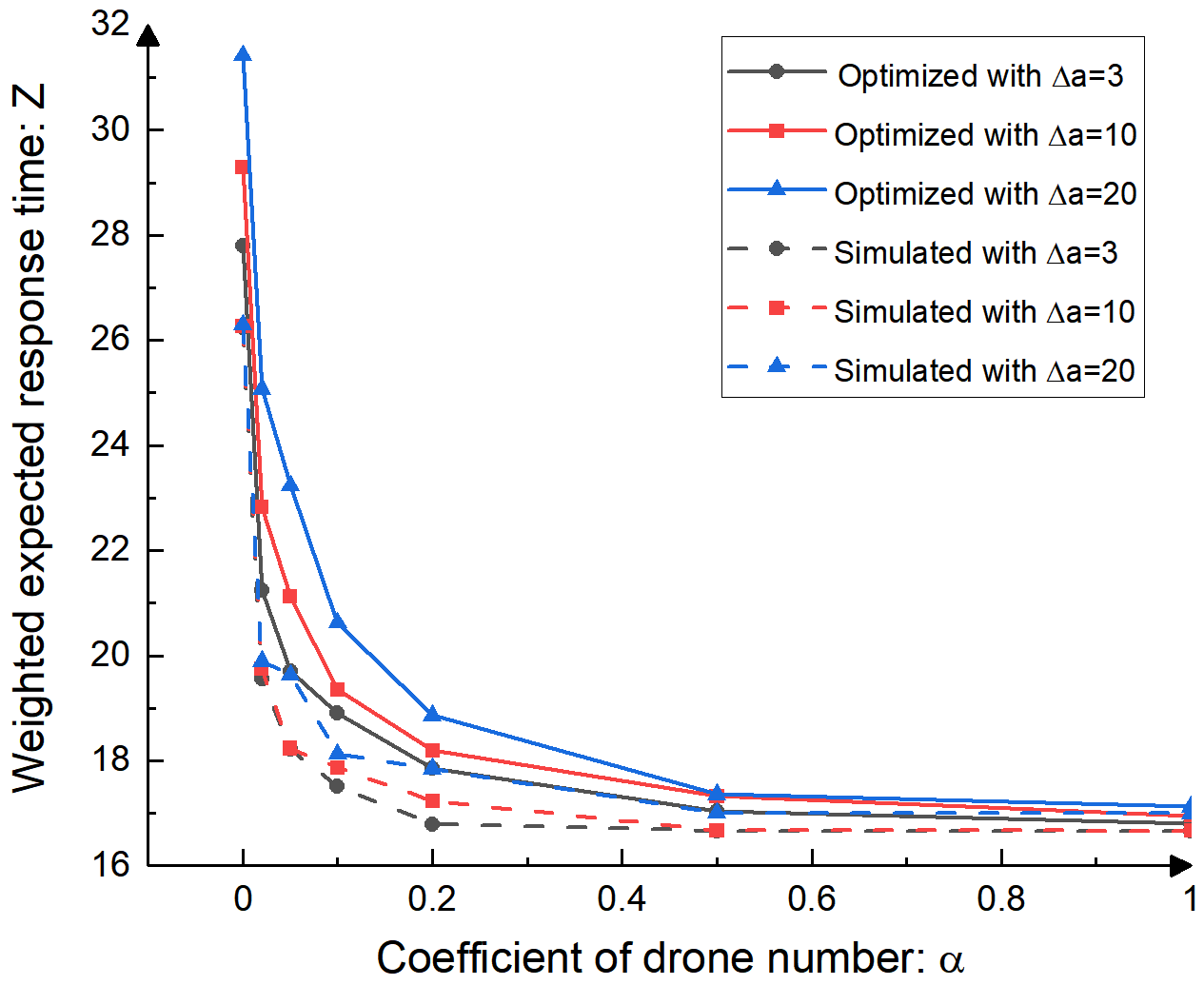}
    \caption{The impact of total number of drones on the weighted expected response time $Z$  with  different $\Delta a$ (3, 10, 20) }
    \label{d12}
\end{figure}

The changes in $Z_r$, which is  the expected maximum response time  of class $r$ demands by varying  coefficient $\alpha$ of drone number when $\Delta a =3$, 10 and 20 can be seen in Figure \ref{d11}. 10 instances are randomly developed for each setting.  The mean values, the upper and the lower quartiles of $Z_r$ are displayed in the graphs. 
As expected, the response times of both classes decline as the number of drones increases and finally converge to a certain value in all settings. 
Besides, with the growth of $\Delta a$, the downward trend of $Z_1$ becomes flatter, indicating the  declining influence of $\alpha$  on the response time of high priority demands. 
While the gap between $Z_1$ and $Z_2$ becomes larger as $\Delta a$ grows. In order to show this phenomenon more directly, we present the  ratio of $Z_2/Z_1$ of a  typical instance in Figure \ref{d13}.   The ratio with a larger  $\Delta a$ is always higher  than that of a smaller value. The lower the ratio, the smaller the difference between the two classes.
As $\Delta a$ increases, the initial priority value plays a more prominent role than time in the final priority value, leading to a larger gap  of the response time between the two classes,  which  approaches the situation in the static system.

\begin{figure}[!htbp]
\centering
\subfigure[$\Delta a=3$]{
\begin{minipage}[t]{0.33\linewidth}
\centering
\includegraphics[width=2in]{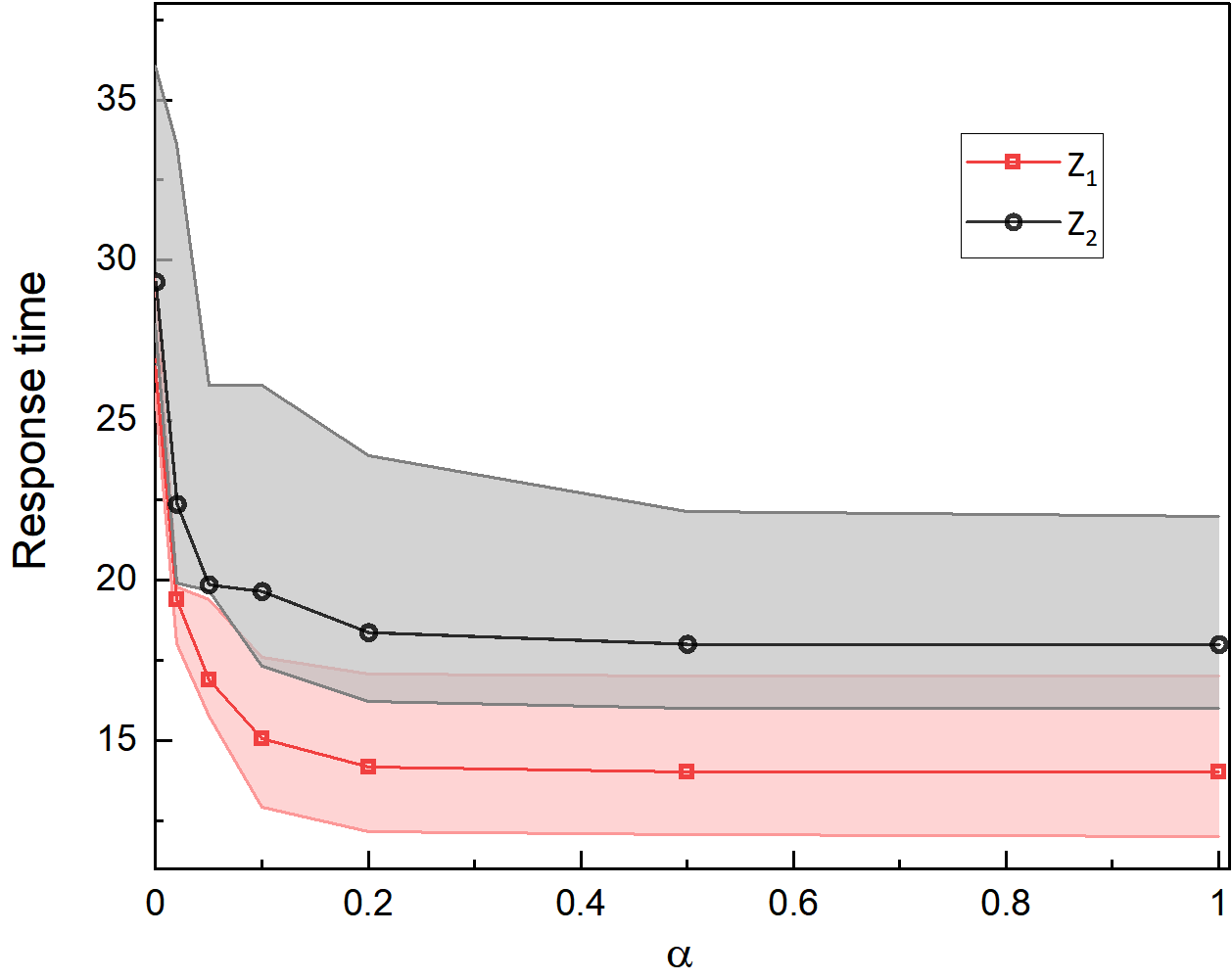}
\end{minipage}%
}%
\subfigure[$\Delta a=10$]{
\begin{minipage}[t]{0.33\linewidth}
\centering
\includegraphics[width=2in]{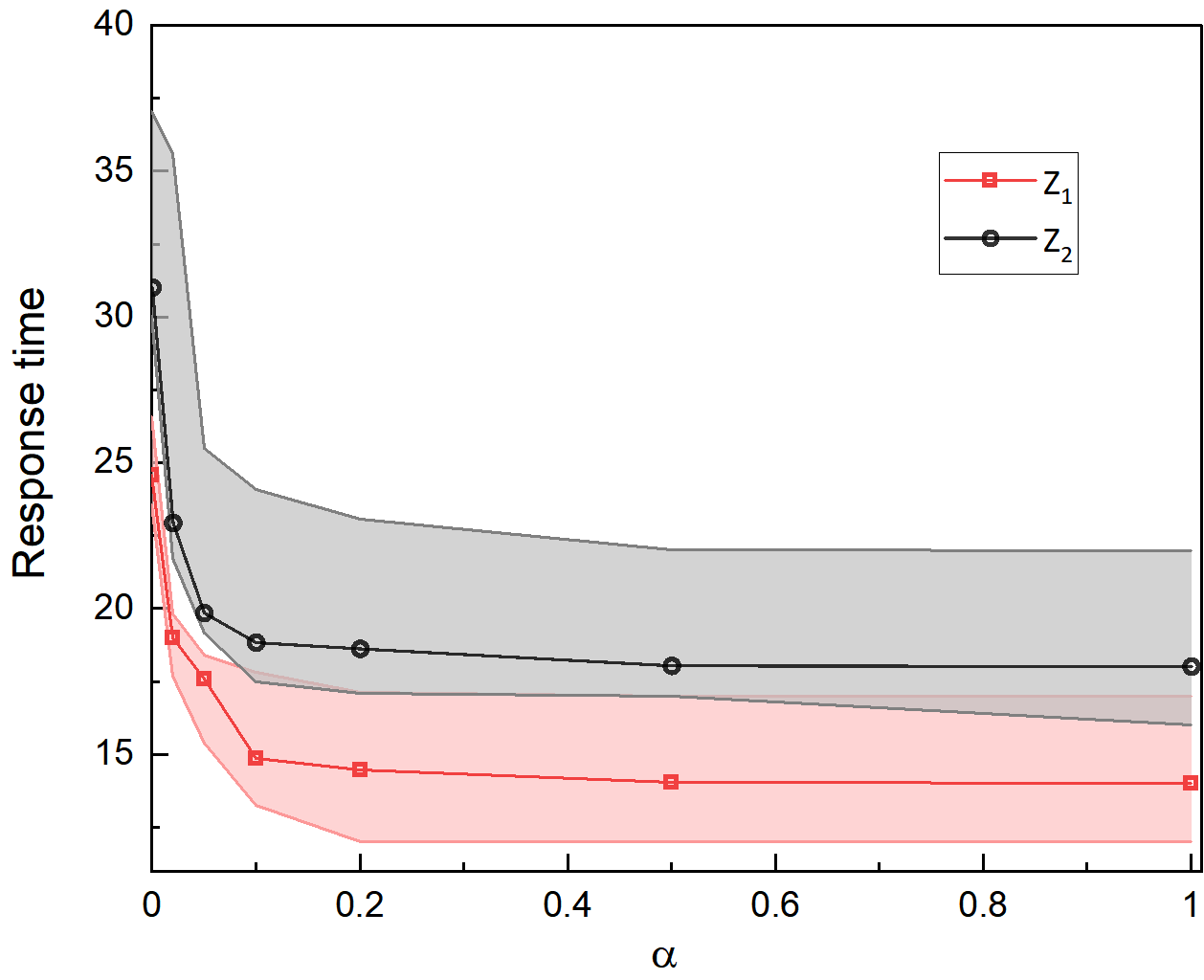}
\end{minipage}%
}%
\subfigure[$\Delta a=20$]{
\begin{minipage}[t]{0.33\linewidth}
\centering
\includegraphics[width=2in]{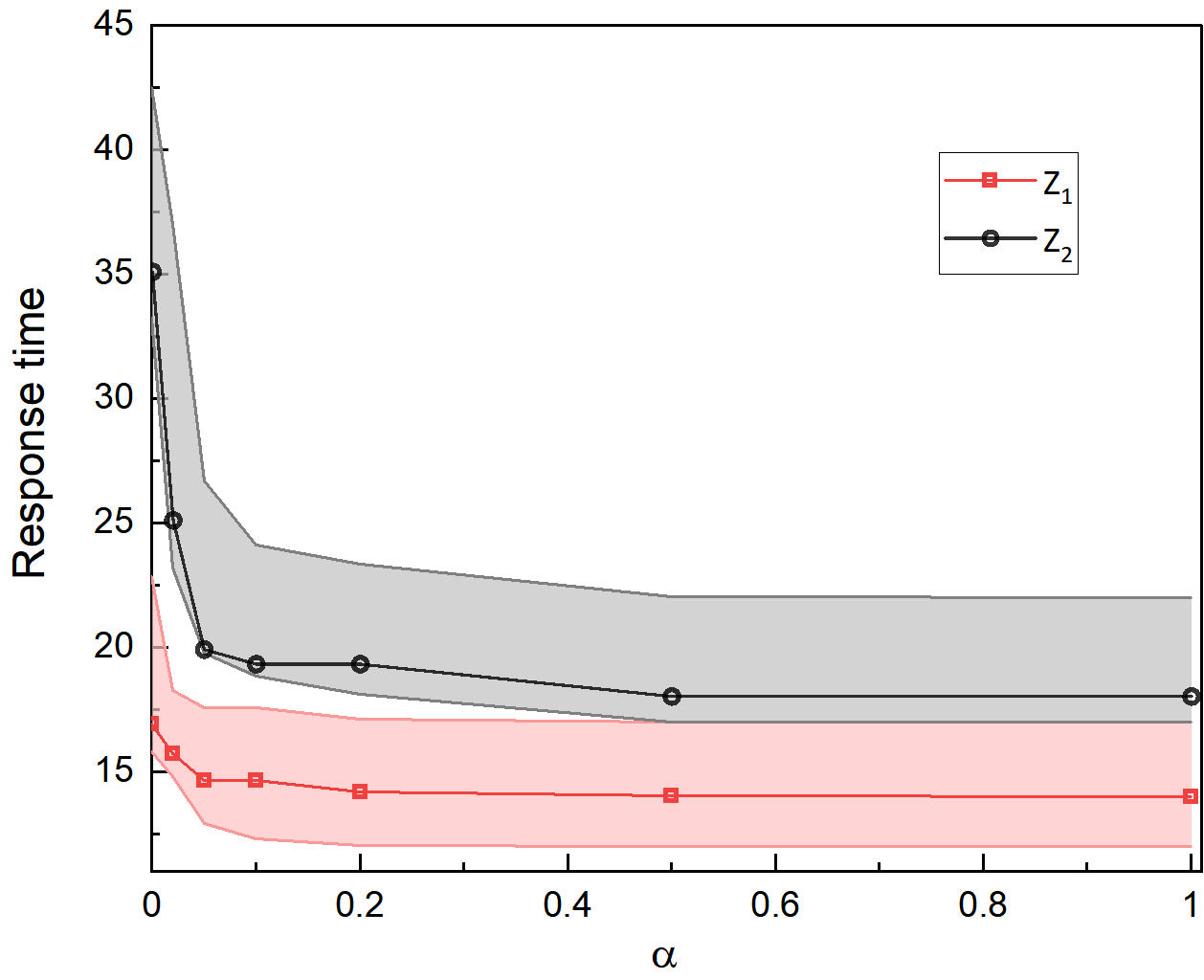}
\end{minipage}
}%
\centering
\caption{ Maximum expected response time of class 1 and class 2 demands with different $\alpha$ ($\Delta a =$3, 10,and 20)}
\label{d11}
\end{figure}

\begin{figure}[!htbp]
    \centering
    \includegraphics[width=3.5in]{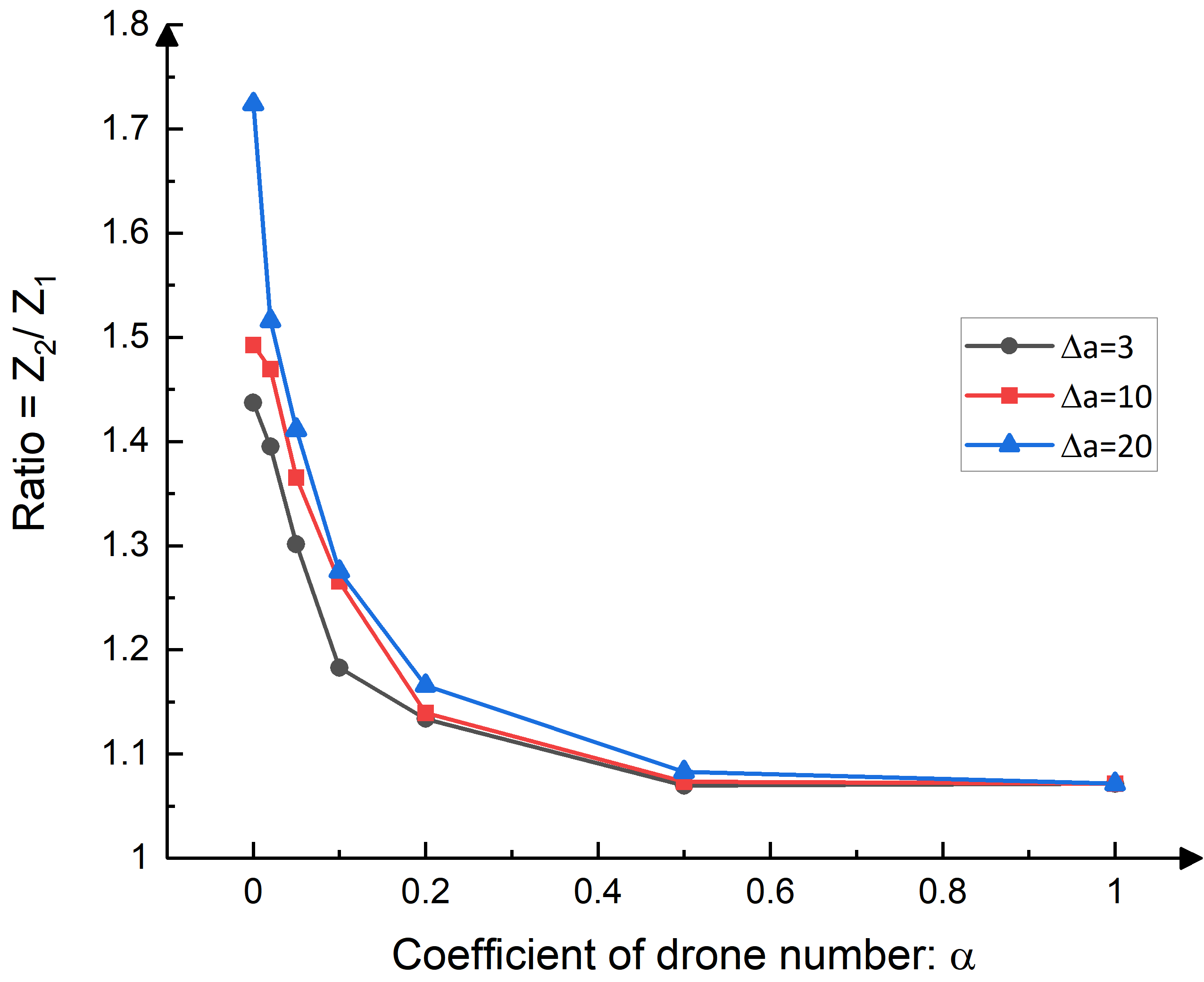}
    \caption{Ratio of the maximum expected response time of class 1 demand to class 2 demand}
    \label{d13}
\end{figure}

\subsubsection{Impact of the weight coefficients}
\ 
\newline
\indent
In this section, we study the impact of the weight of priorities $w_r$   on   the maximum  response time $Z_r$ , the total response time   $sumZ_r=\Sigma_{i\in I_r}\lambda_iy_{ij}(t_{ij}+W_{jr}) $,  the maximum  waiting time $W_r$, and the total waiting time  $sumW_r=\Sigma_{i\in I_r}\lambda_iy_{ij}W_{jr}$.

In Table \ref{table2}, we report the results of a typical instance with  $\Delta a=3,10$, and $20$ and  $\alpha=0.1,0.2$, and $0.5$. Similar to the results in the  static priority system, the response time of class 2 changes in the same direction as $w_1$, while class 1 is the opposite. The trends of $sumZ_r$, $W_r$, and $sumW_r$  correspond to $Z_r$ in general except for some extreme values. Overall,  the values of all indicators  tend to be improved with the rise of   $\alpha$  consistent with the conclusions in section \ref{6.2.1}.

\begin{table}[htbp]
\renewcommand\arraystretch{0.6}
  \centering
  \caption{System performance by varying $w_1$ with $\alpha=$ {0.1, 0.2,  0.5} and $\Delta a=${3, 10, 20}}
    \begin{tabular}{ccccccccccc}
    \toprule
    \multirow{2}[4]{*}{$\Delta a$} & \multirow{2}[4]{*}{$\alpha$} & \multirow{2}[4]{*}{$w_1$(\%)} & \multicolumn{2}{c}{Response time} & \multicolumn{2}{c}{Total response time} & \multicolumn{2}{c}{Waiting time} & \multicolumn{2}{c}{Total waiting time} \\
\cmidrule(rl){4-5} \cmidrule(rl){6-7}\cmidrule(rl){8-9}\cmidrule(rl){10-11}              &       &       & $Z_1$    & $Z_2$    & $sumZ_1$ & $sumZ_2$ & $W_1$    & $W_2$    & $sumW_1$ & $sumW_2$ \\
    \midrule
    3     & 0.1   & 100   & 14.446  & 21.476  & 18.000  & 24.011  & 3.300  & 9.911  & 4.609  & 12.684  \\
          &       & 70    & 14.943  & 17.678  & 16.709  & 18.453  & 2.009  & 3.753  & 1.462  & 2.678  \\
          &       & 50    & 14.943  & 17.678  & 16.709  & 18.453  & 2.009  & 3.753  & 1.462  & 2.678  \\
          &       & 30    & 16.467  & 16.130  & 17.943  & 17.244  & 3.243  & 2.544  & 2.467  & 3.960  \\
          &       & 0     & 16.467  & 16.130  & 17.943  & 17.244  & 3.243  & 2.544  & 2.467  & 3.960  \\
    \midrule
    3     & 0.2   & 100   & 14.066  & 21.476  & 17.544  & 23.775  & 2.844  & 9.675  & 4.609  & 12.684  \\
          &       & 70    & 14.228  & 16.856  & 15.539  & 16.203  & 0.839  & 2.103  & 0.941  & 1.856  \\
          &       & 50    & 14.446  & 15.578  & 16.053  & 16.162  & 1.353  & 2.062  & 1.363  & 2.394  \\
          &       & 30    & 14.943  & 15.235  & 16.650  & 16.218  & 1.950  & 2.118  & 1.363  & 2.394  \\
          &       & 0     & 16.467  & 15.097  & 18.478  & 16.783  & 3.778  & 2.683  & 2.467  & 3.960  \\
    \midrule
    3     & 0.5   & 100   & 14.000  & 21.476  & 17.465  & 23.733  & 2.765  & 9.633  & 4.609  & 12.684  \\
          &       & 70    & 14.010  & 15.038  & 15.530  & 15.564  & 0.830  & 1.464  & 1.363  & 2.394  \\
          &       & 50    & 14.035  & 15.011  & 15.077  & 15.861  & 0.377  & 1.161  & 0.557  & 1.262  \\
          &       & 30    & 14.122  & 15.001  & 15.080  & 14.671  & 0.380  & 0.571  & 0.389  & 0.820  \\
          &       & 0     & 14.943  & 15.000  & 16.066  & 15.132  & 1.366  & 1.032  & 0.943  & 1.800  \\
    \midrule
    10    & 0.1   & 100   & 14.370  & 21.950  & 16.770  & 24.546  & 2.070  & 10.446  & 2.711  & 12.684  \\
          &       & 70    & 14.764  & 18.684  & 15.892  & 20.493  & 1.192  & 6.393  & 0.764  & 12.684  \\
          &       & 50    & 14.764  & 18.684  & 15.892  & 20.493  & 1.192  & 6.393  & 0.764  & 12.684  \\
          &       & 30    & 14.764  & 18.684  & 15.892  & 20.493  & 1.192  & 6.393  & 0.764  & 12.684  \\
          &       & 0     & 16.077  & 16.557  & 17.650  & 18.402  & 2.950  & 4.302  & 2.077  & 5.878  \\
    \midrule
    10    & 0.2   & 100   & 14.060  & 21.950  & 16.399  & 24.209  & 1.699  & 10.109  & 2.711  & 12.684  \\
          &       & 70    & 14.201  & 16.557  & 15.398  & 16.822  & 0.698  & 2.722  & 0.762  & 2.674  \\
          &       & 50    & 14.370  & 15.578  & 15.601  & 16.437  & 0.901  & 2.337  & 0.762  & 2.674  \\
          &       & 30    & 14.764  & 15.578  & 15.791  & 15.773  & 1.091  & 1.673  & 0.764  & 2.679  \\
          &       & 0     & 14.764  & 15.295  & 15.675  & 15.858  & 0.975  & 1.158  & 0.764  & 2.679  \\
    \midrule
    10    & 0.5   & 100   & 14.000  & 21.950  & 16.327  & 24.160  & 1.627  & 10.060  & 2.711  & 12.684  \\
          &       & 70    & 14.037  & 15.038  & 17.114  & 14.672  & 0.514  & 3.572  & 0.350  & 0.872  \\
          &       & 50    & 14.060  & 15.005  & 14.947  & 14.675  & 0.247  & 0.575  & 0.291  & 0.872  \\
          &       & 30    & 14.060  & 15.005  & 14.947  & 14.675  & 0.247  & 0.575  & 0.291  & 0.872  \\
          &       & 0     & 14.370  & 15.000  & 15.253  & 15.586  & 0.553  & 0.886  & 0.370  & 1.287  \\
    \midrule
    20    & 0.1   & 100   & 14.344  & 22.209  & 16.085  & 24.818  & 1.385  & 10.718  & 1.621  & 12.684  \\
          &       & 70    & 14.649  & 18.684  & 15.718  & 20.680  & 1.018  & 6.580  & 0.649  & 12.684  \\
          &       & 50    & 14.649  & 18.684  & 15.718  & 20.680  & 1.018  & 6.580  & 0.649  & 12.684  \\
          &       & 30    & 14.649  & 18.684  & 15.718  & 20.680  & 1.018  & 6.580  & 0.649  & 12.684  \\
          &       & 0     & 14.649  & 17.889  & 15.728  & 19.144  & 1.028  & 4.444  & 0.649  & 3.257  \\
    \midrule
    20    & 0.2   & 100   & 14.060  & 22.209  & 15.744  & 24.443  & 1.044  & 10.343  & 1.621  & 12.684  \\
          &       & 70    & 14.195  & 16.174  & 15.089  & 16.301  & 0.389  & 1.601  & 0.259  & 1.174  \\
          &       & 50    & 14.195  & 16.174  & 15.089  & 16.301  & 0.389  & 1.601  & 0.259  & 1.174  \\
          &       & 30    & 14.195  & 16.174  & 15.089  & 16.301  & 0.389  & 1.601  & 0.259  & 1.174  \\
          &       & 0     & 14.649  & 15.295  & 15.537  & 16.031  & 0.837  & 1.331  & 0.649  & 3.257  \\
    \midrule
    20    & 0.5   & 100   & 14.000  & 17.889  & 14.950  & 18.167  & 0.250  & 3.467  & 0.416  & 2.889  \\
          &       & 70    & 14.018  & 15.183  & 18.123  & 14.931  & 1.623  & 0.231  & 2.669  & 0.183  \\
          &       & 50    & 14.034  & 15.097  & 16.077  & 14.272  & 1.377  & 0.172  & 4.350  & 0.149  \\
          &       & 30    & 14.034  & 15.092  & 19.437  & 14.835  & 2.937  & 0.135  & 4.350  & 0.092  \\
          &       & 0     & 14.344  & 15.000  & 15.286  & 15.048  & 0.586  & 0.948  & 0.344  & 1.416  \\
    \bottomrule
    \end{tabular}%
  \label{table2}%
\end{table}%

To show the impact of $w_r$ on each priority class more intuitively, we display the ratios  of total waiting time and total response time  of class 1 versus class 2  with $\alpha=0.1$   in Figure \ref{d21}.  The growth of $w_1$  leads to a decreasing trend in the ratio of both  waiting time and response time. This effect is especially pronounced with smaller $\Delta a$ where the preferential treatment of class 1 demand is not that strict. We observe that the ratio of the total response time is larger with smaller  $\Delta a$ generally, so is the total waiting time. This can be explained that larger $\Delta a$  means more significant priority-class differences, where class 1 demand is  more prioritized, yielding a larger gap between the two classes. Overall, the weight coefficients have a great impact with  small $\Delta a$ and a slight impact with  large $\Delta a$ on the system performance. We can adjust the importance of each priority class by using different $w_r$ in dynamic priority system to match the actual situation.

\begin{figure}[!htbp]
\centering
\subfigure[$\Delta a=3$]{
\begin{minipage}[t]{0.33\linewidth}
\centering
\includegraphics[width=2in]{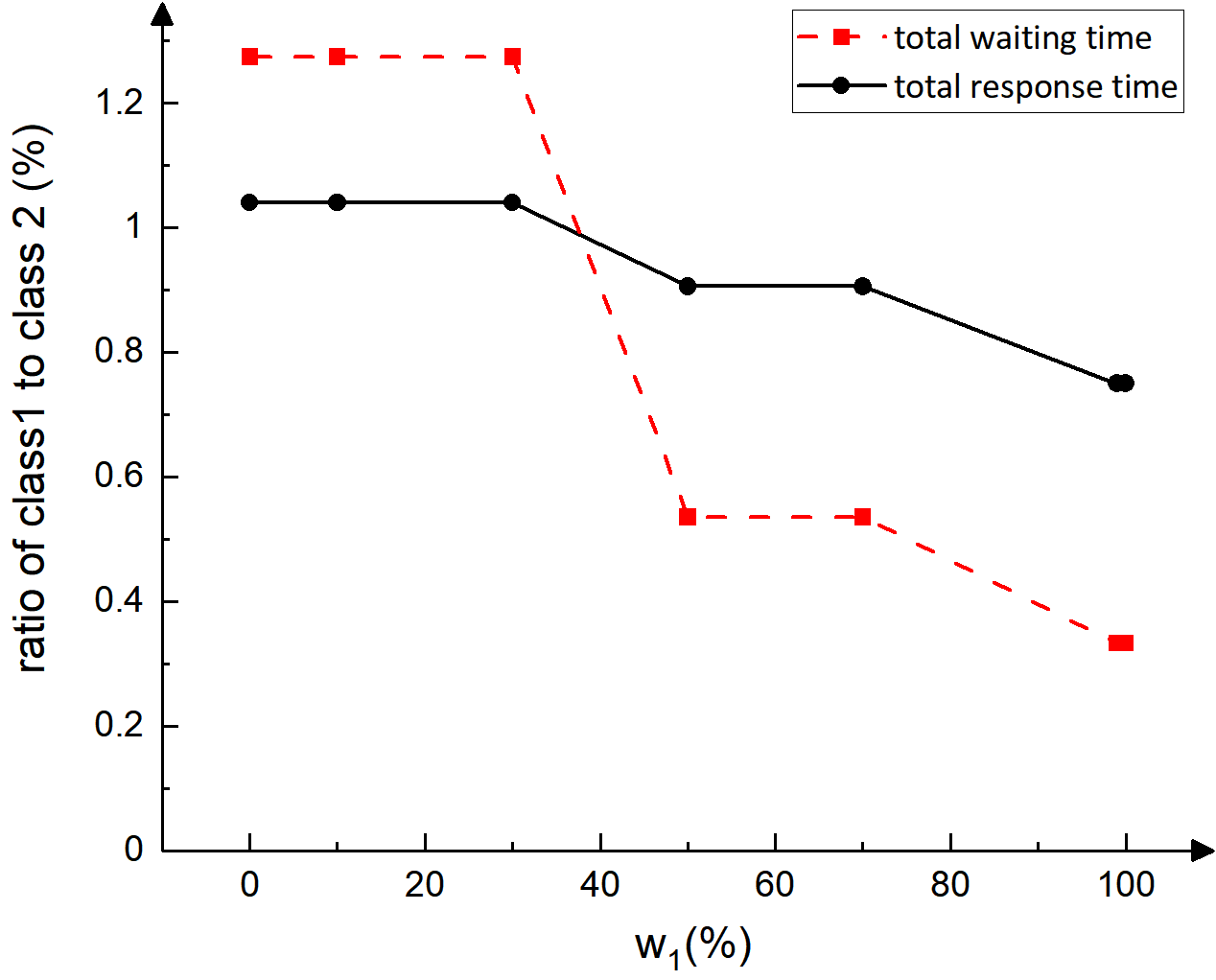}
\end{minipage}%
}%
\subfigure[$\Delta a=10$]{
\begin{minipage}[t]{0.33\linewidth}
\centering
\includegraphics[width=2in]{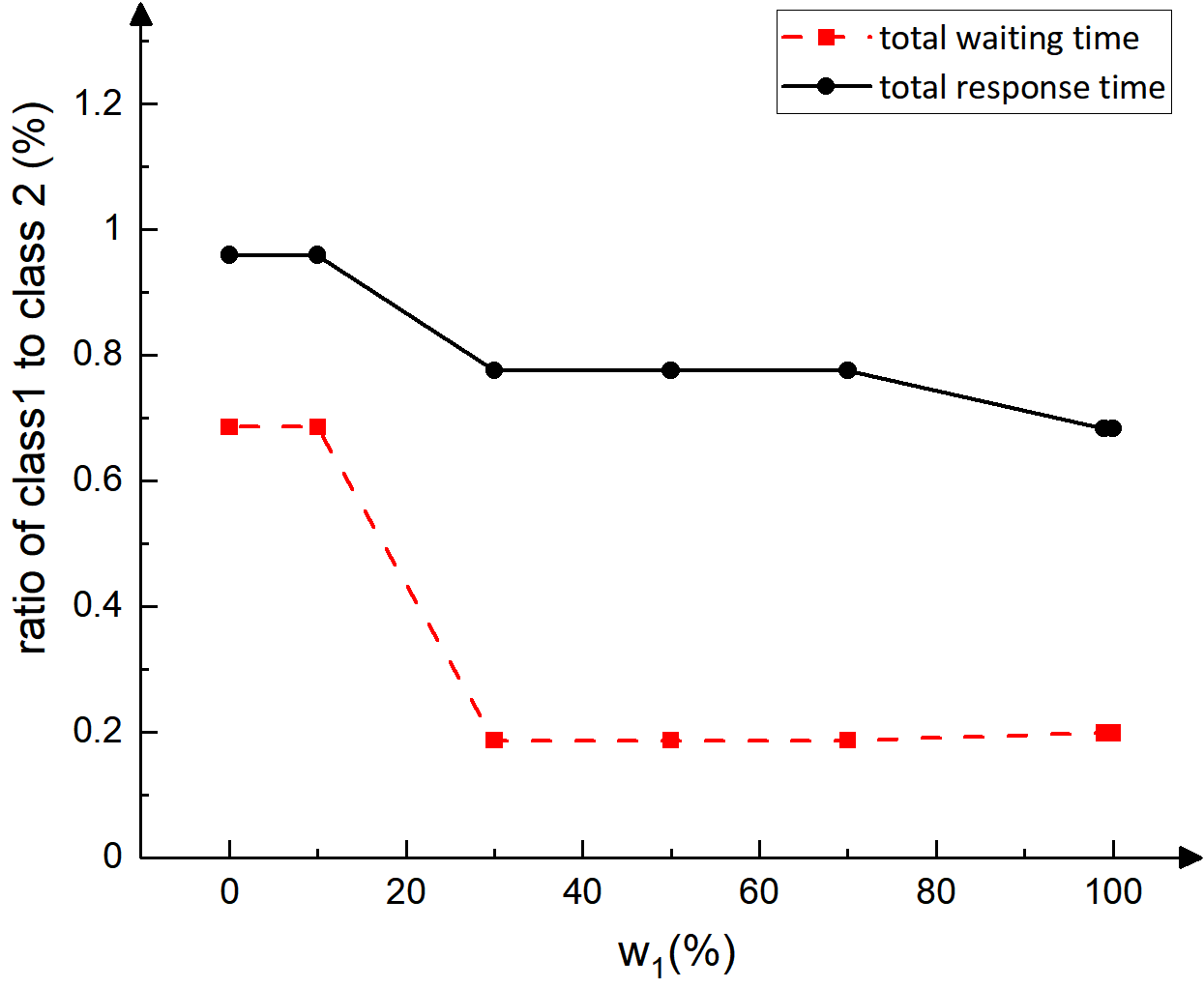}
\end{minipage}%
}%
\subfigure[$\Delta a=20$]{
\begin{minipage}[t]{0.33\linewidth}
\centering
\includegraphics[width=2in]{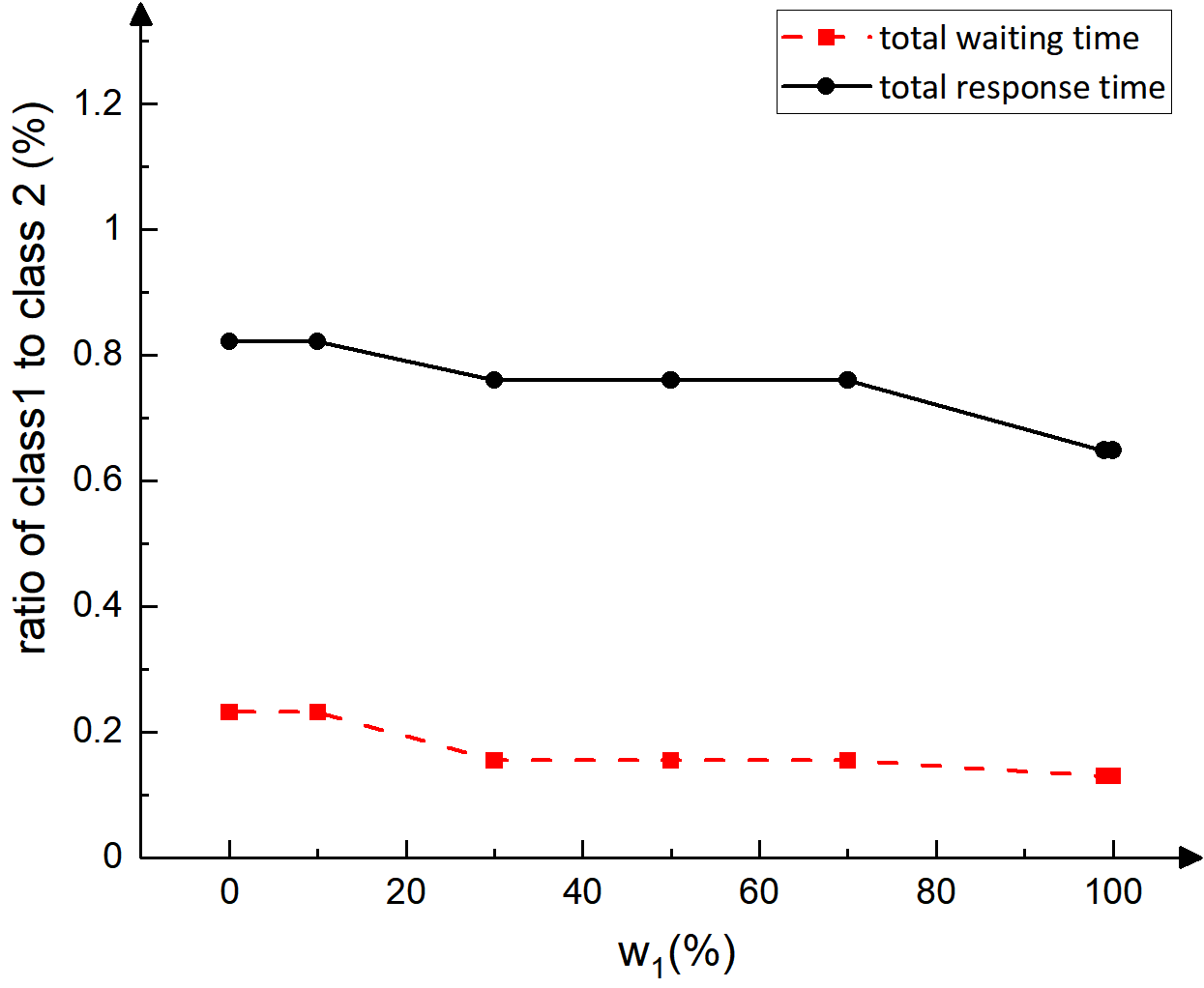}
\end{minipage}
}%

\centering
\caption{ Ratio of total response time and  total waiting time  of class 1 to class 2  demand by varying $w_1$ with $\alpha=0.1$}
\label{d21}
\end{figure}

\subsubsection{Comparison of the three priority systems}
\ 
\newline
\indent
In this section, the comparisons of   performance among  dynamic priority,  static priority, and  non-priority systems are provided. The priority class of each demand node and the objective are the same under the three  priority disciplines.   All experiments are performed with $\alpha=0.1$. 
We use DP for dynamic problem, SP for static problem,  and NP for non-priority problem for simplicity.

The box plot in Figure \ref{d3box} displays the gap of the maximum expected response and waiting time between the dynamic priority system and the static priority  or non-priority system, which takes the value
\begin{equation*}
    Gap=\frac{T_{DP}-T_{SP}(T_{NP})}{T_{SP}(T_{NP})}\times 100\%,
\end{equation*}
where $T_{DP}$ represents the  response  or waiting time in the dynamic priority system,  $T_{NP}$ and $T_{SP}$ indicate those in the non-priority and static priority systems. 
 20 instances are randomly generated under each setting. 
Compared to  NP,  DP sees lower response and waiting time for class 1 demands and higher values for class 2 demands, which is also the case when comparing SP with NP (see  section \ref{npsp}).  However, when DP is compared to  SP (see the red boxes in the figure), the trend is completely the opposite.
  This agrees with the intuition that when compared with a static priority system, dynamic priority discipline  reduces  precedence of class 1 demands, leading to a longer waiting time of class 1 and a shorter waiting time of class 2. However, the opposite effect happens when compared with a non-priority system.

\begin{figure}[!htb]
\centering 
\includegraphics[width=3.5in]{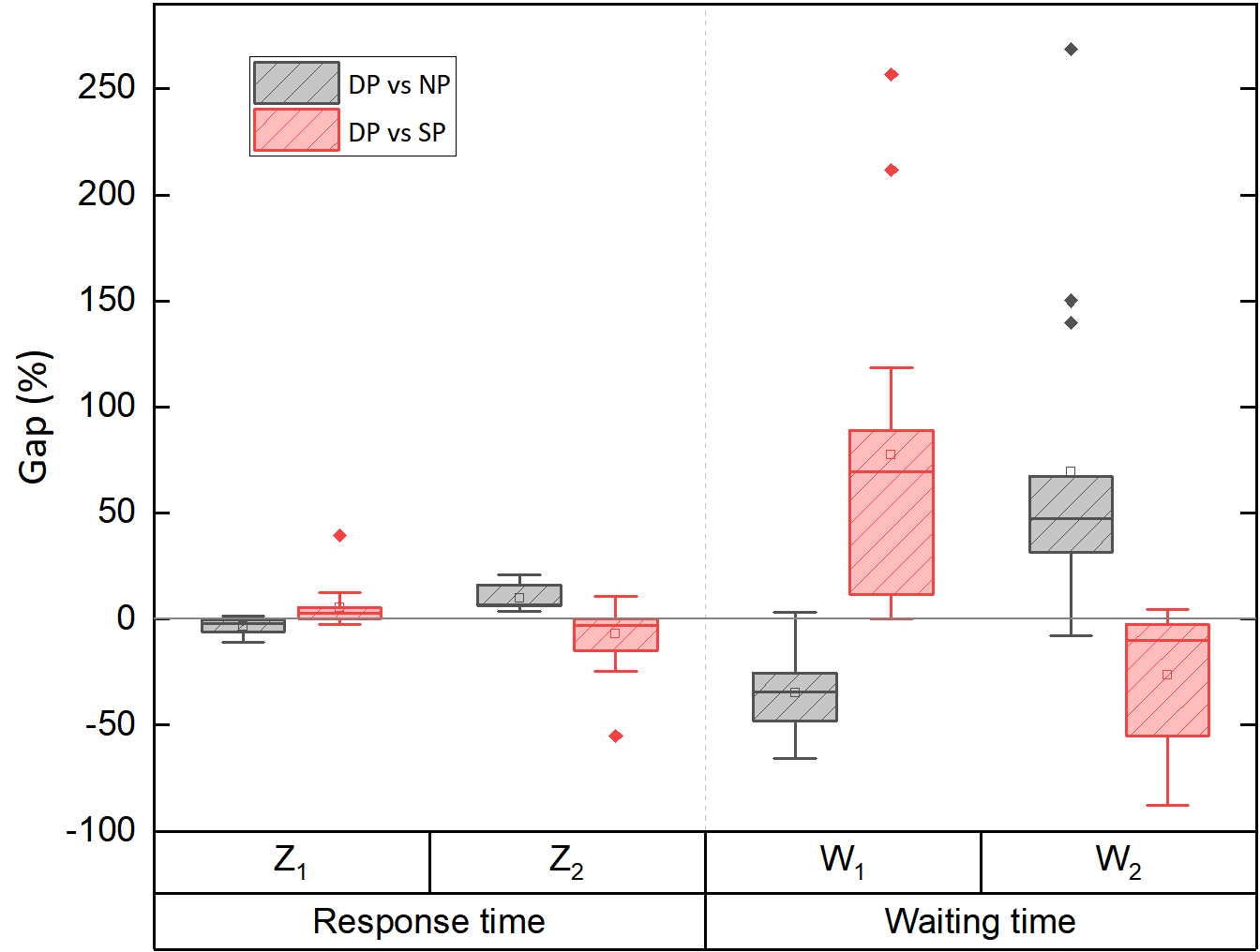} 
\caption{ Gap of maximum response and waiting time between the dynamic priority system and the static priority or non-priority system  with $\Delta a=3$ } 
\label{d3box} 
\end{figure}

Actually, the congested facility location-allocation system with static priority and without priority are two special cases of the system with dynamic priority when $\Delta a$ is set to  $+\infty$ and 0 respectively.  The results in Figure \ref{d32} support this point: the  gap  of the response time between the two classes equals 0 when $\Delta a=0$ compared with non-priority system and  gradually approaches 0 as $\Delta a$ becomes extremely large compared with the  static priority system. 

\begin{figure}[!htb]
\centering 
\includegraphics[width=3.5in]{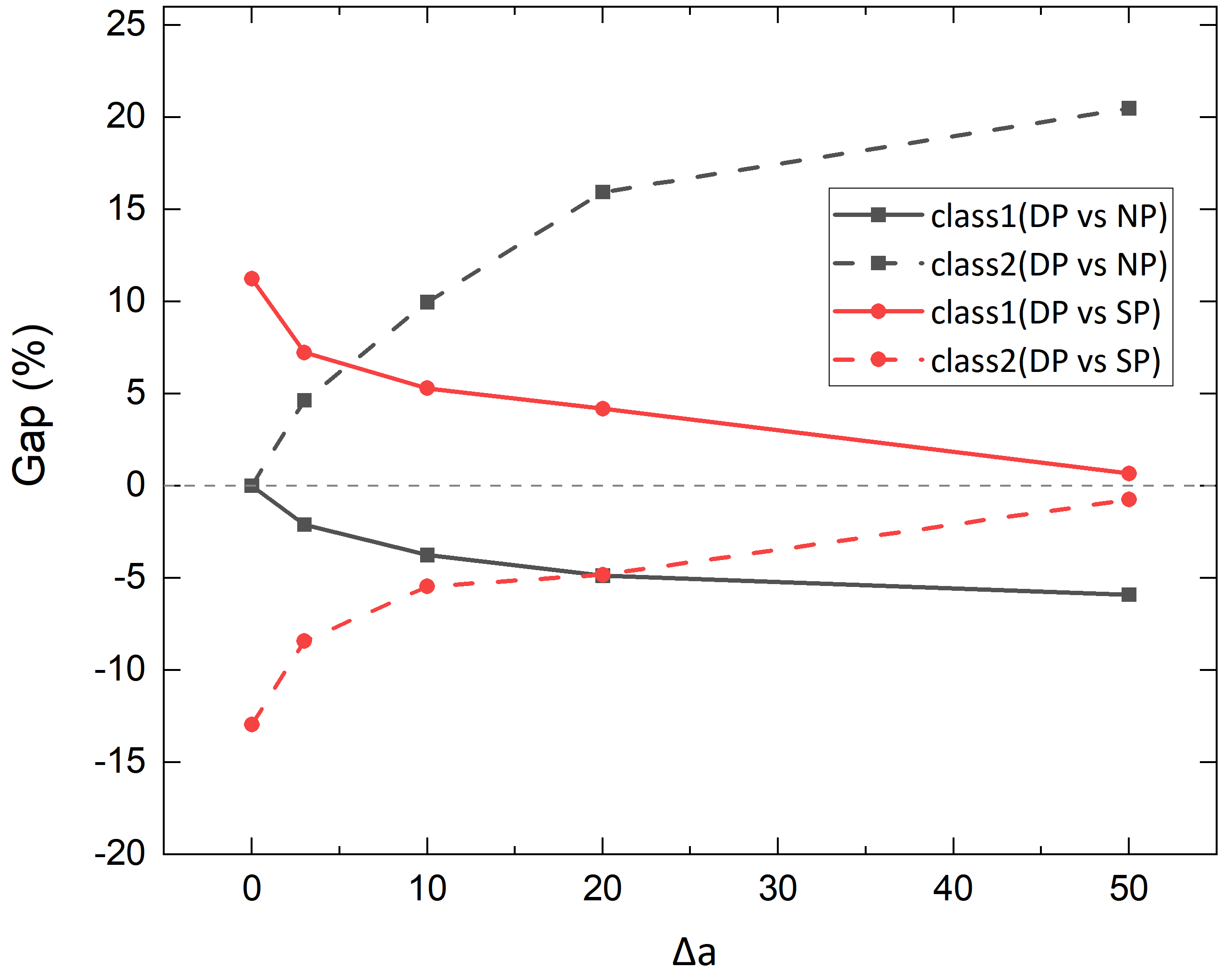} 
\caption{ Gap of maximum response  time between the dynamic priority system and the static priority or non-priority system by varying $\Delta a$ } 
\label{d32} 
\end{figure}

In addition to the overall  value of system performance, we further investigate the waiting time of each individual in our emergency delivery system. The waiting time of each demand with priority $r$ is collected through the simulation process, denoted by  $w_r$.  Tail probability is introduced, which indicates the probability of $w_r$ larger than some tail value $t$,  that is  $P(w_r>t)$. 
 Figure \ref{d34} displays the changes of the tail probability of waiting time as $t$ increases for the two priority classes in the dynamic and static systems with $\Delta a=3$, $\alpha=0.1$.  We  observe the decreasing  monotonicity in all the four groups, and the tail probability of class 1 demand from the static priority system declines faster than that from the dynamic priority system, while  the opposite is the case for class 2 demand. The strict priority discipline in the  static priority system leads to the excessive waiting time of some class 2 demands. As can be seen, about 20 percent of demands need to wait for more than 10 time units. This observation suggests that dynamic priority can effectively eliminate the extremely long waiting time of the lower priority class when adopting the priority mechanism.

\begin{figure}[!htb]
\centering 
\includegraphics[width=3.5in]{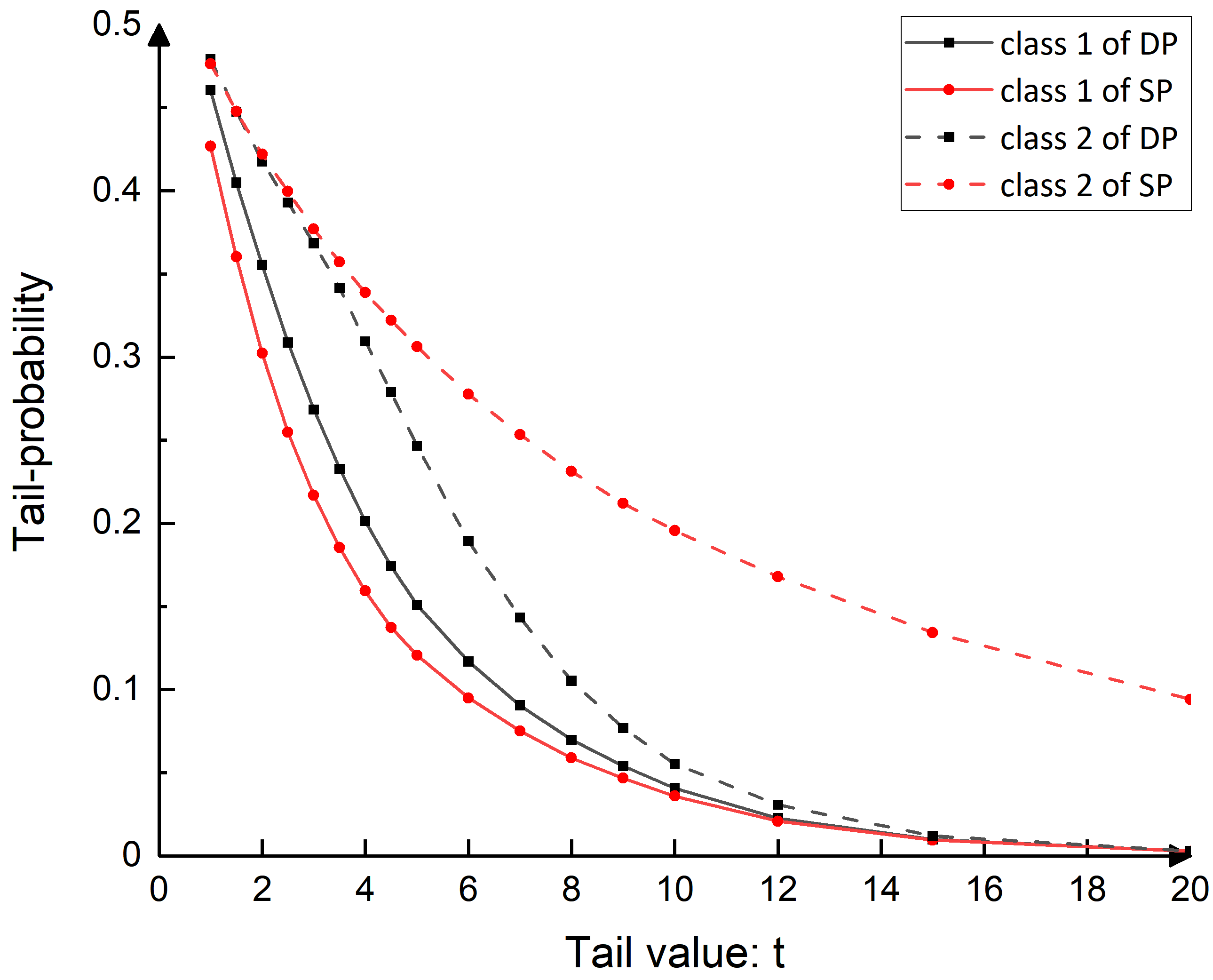} 
\caption{ Tail probability $P(W_r>t)$ } 
\label{d34} 
\end{figure}

\section{Conclusions and future work}\label{section7}
In this paper, the queuing-location models with three  priority disciplines are proposed, which are FCFS, static priority,  and dynamic priority. The optimal decisions of the facility location, demand  allocation, and deployment of drones are made simultaneously. In the model, drones are regarded as mobile servers with a general service time  related to the allocation and capacity decisions. 
 The worst expected response time among all demands is minimized in our problem concerning the equity of the emergency service. According to the nonlinear nature of the proposed models, we reformulate the original models into  mixed-integer second-order cone programs that can be solved with computational efficiency.

The optimal decisions from the mathematical models are  input to the simulation to conduct the sensitivity analysis and  evaluate the system performance.  The observation from the results suggests the unnecessity  to distribute excessive drones at a higher cost. According to the  instances  analyzed in this study, it is recommended to set  the number of   drones with $\alpha$  around 0.2.
The weight coefficient of  the priority in  the static priority system is ineffective  to adjust the importance of the priority classes, but it works in the dynamic system, especially under a  small initial priority-class gap. 
The results also show the necessity to apply  suitable priority disciplines  for different situations to improve the system performance and provide better service.
Static priority discipline is more suitable when there are  demands of  a particularly  urgent need,  since it can significantly reduce the response time of the high priority demands. 
Dynamic priority can effectively eliminate the extremely long waiting time of the lower priority class when adopting the priority mechanism. Therefore, the impact of the  dynamic priority lies between  the static priority and non-priority disciplines, and   can be adjusted by the initial priority-class gap.

When dealing with the  dynamic priority system, the upper bound of the waiting time is used, leading to the  non-optimality of the final results in several instances. Our future research will try to use the exact value of waiting time in the facility location-allocation problem with dynamic priority.
Besides,  inseparable demands are assumed in our problem, where each demand can only be assigned to exactly one facility. In the future,  we can consider separable demands and allow them to be allocated to several different facilities.  Another interesting and useful research direction is  to extend the problem to robust or stochastic programming models to address the uncertainty  factors such as the travel time or user demands.


\end{document}